\documentclass[11pt]{amsart}

\title[Positive flow-spines and contact $3$-manifolds, II]{Positive flow-spines and contact $3$-manifolds, II}

\author[Ishii]{Ippei Ishii}
\address{Department of Mathematics, Faculty of Science and Technology, Keio University, 3-14-1 Hiyoshi, Kohoku, Yokohama 223-8522, Japan}

\author[Ishikawa]{Masaharu Ishikawa}
\address{Department of Mathematics, Hiyoshi Campus, Keio University, 
4-1-1 Hiyoshi, Kohoku, Yokohama 223-8521, Japan}
\email{ishikawa@keio.jp}

\author[Koda]{Yuya Koda}

\address{Department of Mathematics, Hiyoshi Campus, Keio University, 
4-1-1 Hiyoshi, Kohoku, Yokohama 223-8521, Japan / International Institute for Sustainability with Knotted Chiral Meta Matter, Hiroshima University, 1-7-1 Kagamiyama, Higashi-Hiroshima, 739-8526, Japan}
\email{koda@keio.jp}


\author[Naoe]{Hironobu Naoe}
\address{Department of Mathematics, Chuo University, 
1-13-27 Kasuga, Bunkyo-ku, Tokyo, 112-8551, Japan}
\email{naoe@math.chuo-u.ac.jp}


\usepackage{amsfonts,amsmath,amssymb,amscd}
\usepackage{amsthm}
\usepackage{latexsym}
\usepackage{graphicx}
\usepackage{psfrag}
\usepackage{color}

\usepackage{ulem}

\usepackage{fancybox, ascmac}
\usepackage{multicol}

\theoremstyle{plain}
\newtheorem*{theorem*}{Theorem}
\newtheorem*{lemma*} {Lemma}
\newtheorem*{corollary*} {Corollary}
\newtheorem*{proposition*}{Proposition}
\newtheorem*{conjecture*}{Conjecture}
\newtheorem{theorem}{Theorem}[section]

\newtheorem{corollary}[theorem]{Corollary}
\newtheorem{proposition}[theorem]{Proposition}

\theoremstyle{remark}

\newtheorem*{definition}{Definition}

\newtheorem*{example*}{Example}

\theoremstyle{definition}

\newtheoremstyle{citing}
  {}
  {}
  {\itshape}
  {}
  {\bfseries}
  {.}
  {.5em}
  {\thmnote{#3}}

\theoremstyle{citing}

\textwidth=5.8in
\voffset=0.25in
\oddsidemargin.25in
\evensidemargin.25in
\marginparwidth=.85in
\footskip=.45in

\newcommand{\Int}{\mathrm{Int\,}}

\newcommand{\Nbd}{\mathrm{Nbd}}

\newcommand{\pr}{\mathrm{pr}}

\makeatletter
\@addtoreset{equation}{section}

\makeatother

\begin{document}

\begin{abstract}
This paper corresponds to Section 8 of arXiv:1912.05774v3 [math.GT]. The contents until Section~7 are published in Annali di Matematica Pura ed Applicata 
as a separate paper.
In that paper, it is proved that for any positive flow-spine $P$ of a closed, oriented $3$-manifold $M$, there exists a unique contact structure supported by $P$ up to isotopy.
In particular, this defines 
a map from the set of isotopy classes of positive flow-spines of $M$ to the set of isotopy classes of contact structures on $M$.
In this paper, we show that this map is surjective.
As a corollary, we show that any flow-spine can be deformed to a positive flow-spine by applying first and second regular moves successively.
\end{abstract}

\maketitle


\section{Introduction}

In~\cite{IIKN1}, closed, connected, oriented, contact $3$-manifolds are related to $2$-dimensional simple polyhedra with specific structure, called {\it positive flow-spines}. 
A non-singular flow $\mathcal F$ in a closed, connected, oriented, $3$-manifold $M$
is said to be {\it carried by a flow-spine $P$} of $M$ if $\mathcal F$ is transverse to $P$ and 
is a ``constant flow'' in the complement of $P$ in $M$.
A contact structure $\xi$ on $M$ is said to be {\it supported} by a flow-spine $P$
if there exists a contact form $\alpha$ on $M$ such that $\xi=\ker\alpha$ and 
its Reeb flow is carried by $P$.
A flow-spine has two types of vertices: the vertex on the left in Figure~\ref{fig3} is said to be {\it of $\ell$-type} and the one on the right is {\it of $r$-type}.
A flow-spine is then said to be {\it positive} if it has at least one vertex and all vertices are of $\ell$-type.
It is proved in~\cite{IIKN1} that for any positive flow-spine $P$ of a closed, oriented $3$-manifold $M$, there  exists a unique contact structure supported by $P$ up to isotopy.
This defines a map from the set of isotopy classes of positive flow-spines of $M$ to the set of isotopy classes of contact structures on $M$.
The following main theorem of the present paper shows that this map is surjective.

\begin{figure}[htbp]
\begin{center}
\includegraphics[width=7.5cm, bb=146 625 446 712]{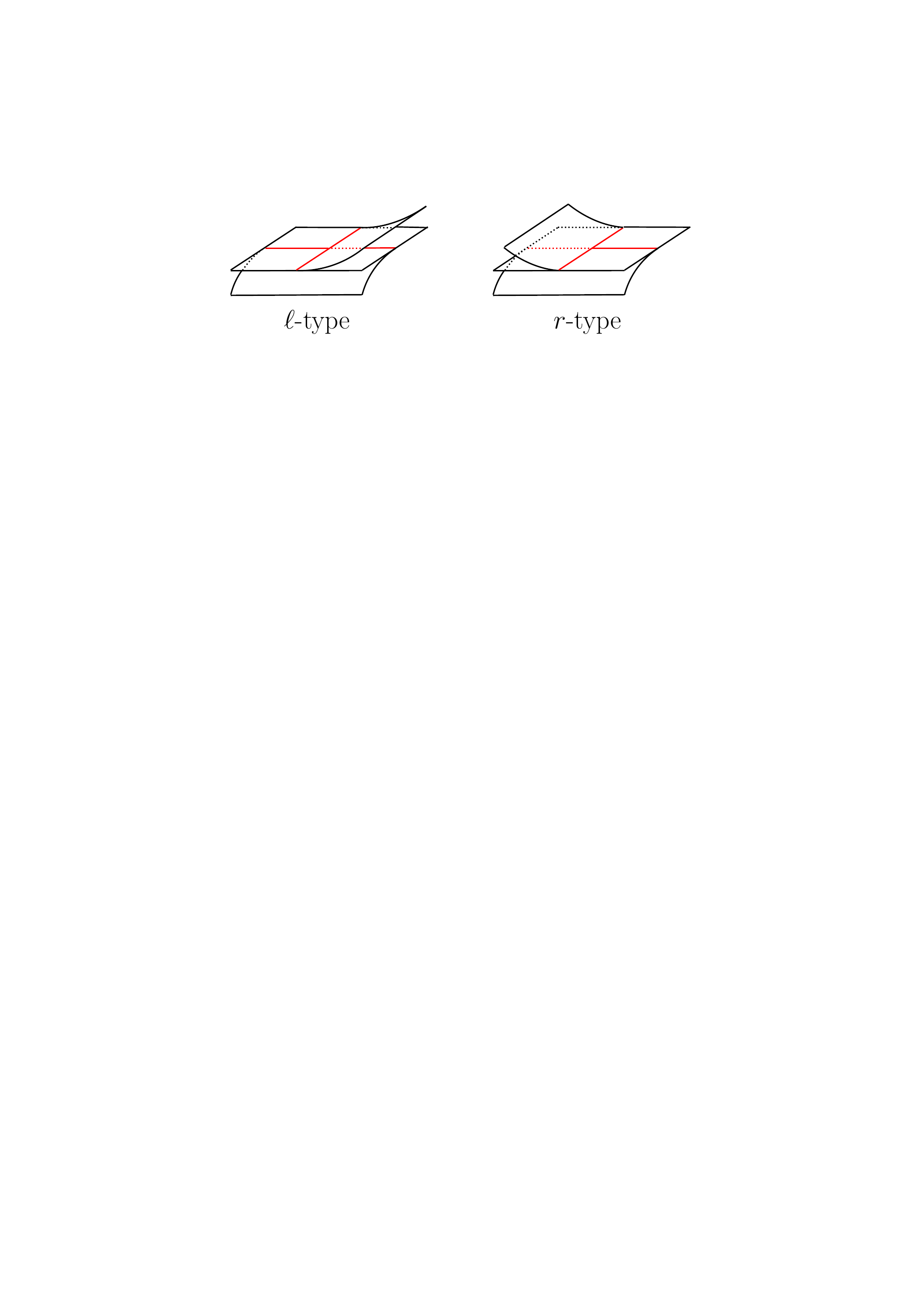}
\caption{Vertices of $\ell$-type and $r$-type. Here, the ambient space is equipped with the right-handed orientation.}\label{fig3}
\end{center}
\end{figure}




\begin{theorem}\label{thm02}
For any closed, connected, oriented, contact $3$-manifold $(M,\xi)$, there exists a positive flow-spine $P$ of $M$ that supports $\xi$.
\end{theorem}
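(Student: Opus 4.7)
The plan is to build a supporting positive flow-spine from a Giroux-compatible open book for $\xi$.

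By Giroux's theorem, there is an open book decomposition $(B,\pi)$ of $M$ that supports $\xi$. Choose a compatible contact form $\alpha$ whose Reeb vector field $R$ is tangent to $B$ and positively transverse to every page. The goal is to construct a $2$-complex $P \subset M$ such that $R$ is everywhere transverse to $P$, every orbit meets $P$ in both forward and backward time, and the complement $M \setminus P$ is a single open $3$-ball on which the flow has the model form required of a flow-spine in the sense of \cite{IIKN1}. The contact structure supported by this $P$ will then coincide with $\xi$ by uniqueness of the supported contact structure from the previous paper, so the real content is to realize $P$ as a \emph{positive} flow-spine.

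Starting from a single page $\Sigma_0 = \overline{\pi^{-1}(0)}$, I would modify $\Sigma_0$ in a standard solid-torus neighborhood of each component of $B$ so as to obtain a simple polyhedron that records one full turn of the Reeb flow while remaining transverse to $R$. To control the combinatorics of the complement, I would first stabilize the open book (Giroux / Loi-Piergallini) until the monodromy admits a factorization $\phi = \tau_{c_1}\cdots\tau_{c_n}$ into positive Dehn twists along curves $c_1,\dots,c_n$ that cut the page into disks. Each twist $\tau_{c_i}$ is then realized by gluing a local polyhedral piece to $\Sigma_0$ along $c_i$, and the local pieces should fit together into a flow-spine whose complement is a single $3$-ball (this is where the cutting-into-disks condition is used).

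The key geometric step is the vertex analysis. Near each curve $c_i$ the local model contributes a controlled number of vertices whose type ($\ell$ or $r$) is determined by the right-handed orientation of $M$, the co-orientation of $P$ induced by $R$, and the sign of the Dehn twist; I expect a direct local computation to show that a \emph{positive} Dehn twist, together with the positive transversality of $R$, produces only $\ell$-type vertices. Running this model along every curve in the factorization then yields a flow-spine $P$ with only $\ell$-type vertices, which is a positive flow-spine supporting $\xi$ as required.

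The hard part will be this vertex-type bookkeeping together with the simultaneous constraint that $M \setminus P$ be connected and a $3$-ball. Ensuring both may force additional stabilizations of the open book (to enlarge the page and refine the factorization) and the insertion of small polyhedral $1$-handles to merge complementary regions, and each such modification must be verified never to introduce an $r$-type vertex. Tracking orientations consistently through all these local moves is, I anticipate, the principal technical obstacle; a secondary issue is matching the resulting combinatorial object precisely to the definition of flow-spine used in \cite{IIKN1}, which may require a final normalization step before the uniqueness result can be invoked.
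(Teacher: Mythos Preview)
Your strategy has a genuine gap at the stabilization step. You propose to stabilize until the monodromy factors as a product of \emph{positive} Dehn twists, but by the Giroux/Loi--Piergallini characterization this is possible precisely when $\xi$ is Stein fillable; for an overtwisted $\xi$ no amount of positive stabilization will produce such a factorization. Since Theorem~\ref{thm02} is stated for arbitrary contact structures, your argument as written covers only the fillable case. Relatedly, your heuristic ``positive Dehn twist $\Rightarrow$ $\ell$-type vertex'' is not the mechanism that actually works: in the paper's construction the monodromy is written as a product of Dehn twists of \emph{either} sign along a fixed system of curves, and the $\ell$-type condition is arranged by a careful choice of auxiliary graphs on a cycle of $6n$ pages together with a specific branched-polyhedron model near the binding; both signs of twist are handled by separate local pictures, each checked to produce only $\ell$-type vertices. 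So positivity of the flow-spine is decoupled from positivity of the twists.

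There is a second, subtler gap. Even granting a positive flow-spine $P$ carrying some monodromy vector field of the open book, you still need a \emph{contact form} for $\xi$ whose Reeb flow is carried by $P$; the uniqueness theorem from \cite{IIKN1} only tells you which contact structure $P$ supports once you know some Reeb flow is carried, not that the particular $\alpha$ you began with works. The paper handles this in a separate step (Proposition~\ref{prop63}): it takes the contact form $\alpha'$ produced by the construction of \cite{IIKN1} on $P$ and surgically replaces it inside a neighborhood of the binding by an explicit model form, checking that the interpolation remains contact and that the resulting Reeb flow is simultaneously carried by $P$ and a monodromy vector field of the open book; only then does Gray stability identify the supported structure with $\xi$. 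Your sketch collapses this into ``build $P$ transverse to the original $R$'', but near the binding $R$ is tangent to $L$ while $P$ must fill the solid torus transversely, so this cannot be done without modifying either the form or the flow-spine there, and that modification is exactly the content you are missing.
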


As mentioned above, this theorem gives a surjection from the set of isotopy classes of positive flow-spines of $M$ to the set of isotopy classes of contact structures on $M$.
It also gives a surjection from the set of positive flow-spines up to homeomorphism to the set of contact $3$-manifolds up to contactomorphism.
To get a one-to-one correspondence, we need to find suitable moves of positive flow-spines. Moves of flow-spines are known in~\cite{Ish92, BP97, EI05}. 
However, moves of positive flow-spines 
are very rare since most known moves for flow-spines yield $r$-type vertices. 
Theorem~\ref{thm02} implies that any flow-spine can be deformed to a positive one by regular moves, see Corollary~\ref{cor02}.

The surjection from the set of positive flow-spines up to homeomorphism to the set of contact $3$-manifolds up to contactomorphism allows us to define a complexity for contact $3$-manifolds like the Matveev complexity for usual $3$-manifolds. This will be discussed in a forthcoming paper.

In Section~2, we briefly introduce fundamental notions such as contact $3$-manifolds, branched polyhedra, flow-spines, regular moves and the construction of a contact form from a positive flow-spine used in~\cite{IIKN1}.
The proof of Theorem~\ref{thm02} is given in Section~3.

The second author is supported by JSPS KAKENHI Grant Numbers JP19K03499 and JSPS-VAST Joint Research Program, Grant number JPJSBP120219602.
The third author is supported by JSPS KAKENHI Grant Numbers
JP20K03588 and JP21H00978.
The fourth author is supported by JSPS KAKENHI Grant Numbers JP19K21019 and 20K14316.


\section{Preliminaries}

Throughout this paper, for a polyhedral space $X$, $\Int X$ represents the interior of $X$,
$\partial X$ represents the boundary of $X$, and $\Nbd(Y;X)$ represents a closed regular neighborhood of a subspace $Y$ of $X$ in $X$,
where $X$ is equipped with the natural PL structure if $X$ is a smooth manifold.
The set $\Int\Nbd(Y;X)$ is the interior of $\Nbd(Y;X)$ in $X$.

\subsection{Contact $3$-manifolds}\label{sec1}

In this subsection, we briefly recall notions and known results in $3$-dimensional contact topology
that will be used in this paper.
The reader may find general explanation, for instance, in~\cite{Etn06, Gei08, OS04}.

Let $M$ be a closed, oriented, smooth $3$-manifold. A {\it contact structure} on $M$ is the $2$-plane field on $M$ given by the kernel of a $1$-form $\alpha$ on $M$ satisfying $\alpha\land d\alpha\ne 0$ everywhere. The $1$-form $\alpha$ is called a {\it contact form}.
If $\alpha\land d\alpha>0$ everywhere on $M$ then the contact structure given by $\ker\alpha$ is called a {\it positive contact structure}
and the $1$-form $\alpha$ is called a {\it positive contact form}.
The pair of a closed, oriented, smooth $3$-manifold $M$ and a contact structure $\xi=\ker\alpha$ on $M$ is called a {\it contact $3$-manifold} and denoted by $(M,\xi)$. In this paper, by a contact structure we mean a positive one.

Two contact structures $\xi$ and $\xi'$ on $M$ are said to be {\it isotopic} if there exists a one-parameter family of contact forms $\alpha_t$, $t\in [0,1]$, such that 
$\xi=\ker\alpha_0$ and $\xi'=\ker\alpha_1$. 
Two contact $3$-manifolds $(M,\xi)$ and $(M',\xi')$
are said to be {\it contactomorphic} if there exists a diffeomorphism $\phi:M\to M'$ such that $\phi_*(\xi)=\xi'$. The map $\phi$ is called a {\it contactomorphism}. 
If $M=M'$ then we also say that $\xi$ and $\xi'$ are contactomorphic. The Gray theorem states that if two contact structures are isotopic then they are contactomorphic~\cite{Gra59}.

Next we introduce the Reeb vector field. Let $\alpha$ be a contact form on $M$.
A vector field $X$ on $M$ determined by the conditions $d\alpha(X,\cdot)=0$ and $\alpha(X)=1$ is called the {\it Reeb vector field} of $\alpha$ on $M$.
Such a vector field is uniquely determined by $\alpha$ and we denote it by $R_\alpha$.
The non-singular flow on a $3$-manifold $M$ generated by a Reeb vector field is called a {\it Reeb flow}.

The Reeb vector field plays important roles in many studies in contact geometry and topology.
In $3$-dimensional contact topology, it is used to give a correspondence between contact structures and open book decompositions of $3$-manifolds.
Let $\Sigma$ be an oriented, compact surface with boundary and $\phi:\Sigma\to\Sigma$ be a diffeomorphism such that $\phi|_{\partial \Sigma}$ is the identity map on $\partial \Sigma$. If a closed, oriented $3$-manifold $M$ is orientation-preservingly homeomorphic to the quotient space
obtained from $\Sigma\times [0,1]$ by the identification $(x,1)\sim (\phi(x),0)$ for $x\in \Sigma$ and $(y,0)\sim (y,t)$ for each $y\in \partial \Sigma$ and any $t\in [0,1]$,
then we say that it is an open book decomposition of $M$.
The image $L$ of $\partial \Sigma$ in $M$ by the quotient map is called the {\it binding},
which equips the orientation as the boundary of $\Sigma$.
The image of the surface $\Sigma\times\{t\}$ in $M$ is called a {\it page}.
We denote the open book by $(M, \Sigma, L, \phi)$.

A contact structure $\xi$ on $M$ is said to be {\it supported} by an open book $(M, \Sigma, L, \phi)$ if there exists a contact form $\alpha$ on $M$ such that $\xi=\ker\alpha$
and the Reeb vector field $R_\alpha$ of $\alpha$ satisfies that
\begin{itemize}
\item $R_\alpha$ is tangent to $L$ and the orientation on $L$ induced from $\Sigma$ coincides with the direction of $R_\alpha$, and
\item $R_\alpha$ is positively transverse to $\Int\Sigma\times\{t\}$ for any $t\in [0,1]$.
\end{itemize}
Any open book has a supported contact structure and this is used by Thurston and Winkelnkemper to prove that any closed, oriented, smooth $3$-manifold admits a contact structure~\cite{TW75}. Note that the existence of a contact structure for any $3$-manifold was first proved by Martinet~\cite{Mar71}.
Giroux then showed that the contact structure supported by a given open book is unique up to isotopy~\cite{Gir02}. He also proved the following theorem, that will be used in the proof of Theorem~\ref{thm02}.

\begin{theorem}[Giroux, cf.~\cite{Etn06}]\label{thm_giroux}
For any contact $3$-manifold $(M,\xi)$, there exists an open book decomposition of $M$ that supports $\xi$.
\end{theorem}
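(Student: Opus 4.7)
The plan is to bootstrap from Giroux's Theorem~\ref{thm_giroux}, converting a supporting open book into a positive flow-spine that carries the Reeb vector field.

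First I would invoke Giroux to obtain an open book decomposition $(M, \Sigma, L, \phi)$ supporting $\xi$, and fix a compatible contact form $\alpha$ with $\xi = \ker\alpha$ whose Reeb vector field $R_\alpha$ is tangent to the binding $L$ in the direction induced by the orientation of $\Sigma$, and positively transverse to every open page. The goal is then to produce a positive flow-spine $P$ of $M$ carrying $R_\alpha$, since carrying $R_\alpha$ is exactly the condition for $P$ to support $\xi$.

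Second I would construct from this open-book data a branched polyhedron $P \subset M$ that is transverse to $R_\alpha$ and whose complement is an open $3$-ball. Choose a spine $\Gamma$ of the page $\Sigma$, i.e., a $1$-complex in $\Int\Sigma$ with $\Sigma \setminus \Gamma$ an open disk $D$. Using $R_\alpha$, trace $\Gamma$ once around the open book to form a singular $2$-complex $W$ (essentially the mapping cylinder of $\phi|_\Gamma$ realized by flow lines of $R_\alpha$, slightly tilted so its interior becomes transverse to the flow). Glue $W$ to a page $\Sigma_0$, modified near $L$ so as not to contain the binding and so the resulting complex has no free edges. Removing the resulting $P$ from $M$ leaves (up to the modifications near $L$) a product $D \times (0,1)$, an open $3$-ball, and $R_\alpha$ is constant on this ball, so $P$ is a flow-spine carrying $R_\alpha$.

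Third I would arrange that every vertex of $P$ is of $\ell$-type. The local model at a vertex of a flow-spine is determined by the three branches meeting along a triple line together with the direction of the carried flow; against the right-handed orientation of $M$, $\ell$-type versus $r$-type is precisely the chirality of this picture. The condition $\alpha \wedge d\alpha > 0$, together with positive transversality of $R_\alpha$ to the pages and its orientation along $L$, fixes a consistent global chirality. Choosing $\Gamma$ so that $\Gamma$ and $\phi(\Gamma)$ meet transversally in positive crossings in $\Sigma_0$ (stabilizing the open book first if necessary, which preserves the supported contact structure up to isotopy by Giroux), one forces each vertex of $W \cap \Sigma_0$ to fit the $\ell$-type model, yielding a positive flow-spine supporting $\xi$.

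The main obstacle is the last step. Positivity of $\alpha$ fixes one chirality globally, but the combinatorics of the branched polyhedron at each vertex also depends on how the three branches of $W$ meet along triple lines, and a naive $\Gamma$ can produce $r$-type vertices. Overcoming this will likely demand a careful choice of $\Gamma$ together with positive stabilizations of the open book, or a local replacement procedure that trades each $r$-type vertex for $\ell$-type vertices while preserving transversality to $R_\alpha$ and keeping the complement a ball. The behavior near $L$, where $R_\alpha$ is tangent to the binding and the product structure degenerates, is a separate delicate point that will require a tailored local model so that no $r$-type vertex is created there either.
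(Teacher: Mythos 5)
Your proposal does not prove the statement it was assigned. The statement here is Giroux's theorem itself (Theorem~\ref{thm_giroux}): every contact $3$-manifold $(M,\xi)$ admits a supporting open book decomposition. Your very first step is to ``invoke Giroux to obtain an open book decomposition supporting $\xi$,'' so the argument is circular with respect to the assigned statement --- you assume exactly what is to be proved. What you have actually sketched is an argument for the main result of the paper, Theorem~\ref{thm02} (every contact structure is supported by a positive flow-spine), which in the paper is deduced from Theorem~\ref{thm_giroux} via Propositions~\ref{prop61} and~\ref{prop63}. A genuine proof of Theorem~\ref{thm_giroux} would have to go through Giroux's own machinery (e.g.\ contact cell decompositions with Legendrian $1$-skeleton, convex surface theory, and taking the ribbon of the Legendrian graph as the page), none of which appears in your outline; note that the paper itself does not reprove it but cites Giroux and Etnyre's lectures~\cite{Gir02, Etn06}.

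As a secondary remark: even read as an attempt at Theorem~\ref{thm02}, your sketch diverges from the paper's route and leaves its hardest point open. The paper does not build the spine from a spine of the page traced around the monodromy; instead it fixes a factorization of $\phi$ into Dehn twists along the standard curves $a_i,b_i,c_i$, draws explicit graphs on $6n$ pages $\Sigma_1,\dots,\Sigma_{6n}$, and glues explicit pieces near the binding (Proposition~\ref{prop61}), precisely so that every vertex can be checked to be of $\ell$-type; it then reworks the contact form near the binding (Proposition~\ref{prop63}) before concluding via Giroux uniqueness and Gray stability. In your version, the claim that a suitable choice of $\Gamma$ and positive stabilizations force all vertices to be $\ell$-type is exactly the crux, and you acknowledge it is unresolved; without an explicit mechanism (such as the paper's concrete curve systems and local models, including the delicate construction inside $\Nbd(L;M)$) the proposal does not close.
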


These results of Giroux give a surjection
from the set of isotopy classes of open book decompositions of $M$
to the set of isotopy classes of contact structures on $M$, and also 
a surjection from the set of open books up to homeomorphism to 
the set of contact $3$-manifolds up to contactomorphism.
Taking the quotient of the set of open books of $M$ by a certain operation for pages of open books, called a {\it stabilization}, we can think one-to-one correspondence between 
open books and contact $3$-manifolds, that is called the {\it Giroux correspondence}.


\subsection{Branched polyhedron}

A compact topological space $P$ is called a {\it simple polyhedron}, or a 
{\it quasi-standard polyhedron}, 
if every point of $P$ has a regular neighborhood homeomorphic to 
one of the three local models shown in Figure~\ref{fig1}. 
A point whose regular neighborhood is shaped on the model 
(iii) is called a {\it true vertex} of $P$ (or {\it vertex} for short), and we denote the set of true vertices of $P$  by $V(P)$. 
The set of points whose regular neighborhoods are shaped on the models 
(ii), (iii) and (v) is called 
the {\it singular set} of $P$, and we denote it by $S(P)$. 
The set of points whose regular neighborhoods are shaped on the models (iv) and (v) is called 
the {\it boundary} of $P$, and we denote it by $\partial P$.
Each connected component of $P\setminus S(P)$ is called a {\it region} of $P$ and each connected component of $S(P)\setminus V(P)$ is called an {\it edge} of $P$.
A simple polyhedron $P$ is said to be {\it special}, or {\it standard},
if each region of $P$ is an open disk and each edge of $P$ is an open arc.
Throughout this paper, we assume that all regions are orientable.

\begin{figure}[htbp]
\begin{center}
\includegraphics[width=10.0cm, bb=129 531 543 712]{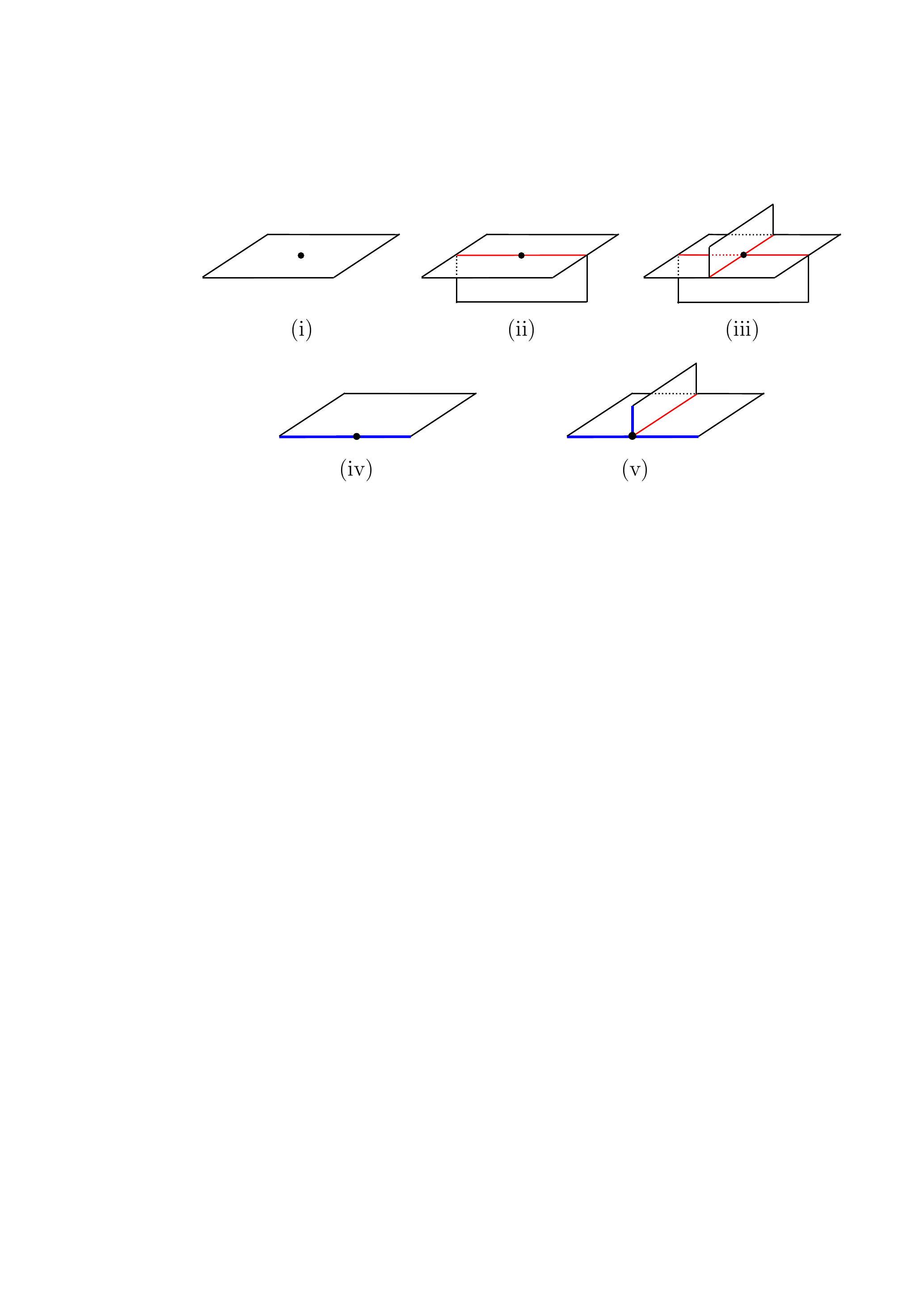}
\caption{The local models of a simple polyhedron.}\label{fig1}
\end{center}
\end{figure}

A {\it branching} of a simple polyhedron $P$ is an assignment of orientations to
regions of $P$ such that the three orientations on each edge of $P$ induced by the three adjacent regions do not agree.
We note that even though each region of a simple polyhedron $P$ is orientable, 
$P$ does not necessarily admit a branching. 
See~\cite{Ish86, BP97, Kod07, Pet12} for general properties of branched polyhedra.

\subsection{Flow-spines}\label{sec22}

A polyhedron $P$ is called a {\it spine} of a closed, connected, oriented $3$-manifold $M$ 
if it is embedded in $M$ and $M$ with removing an open ball collapses onto $P$.
If a spine is simple then it is called a {\it simple spine}.

If a simple spine $P$ admits a branching, then it allows us to smoothen $P$ in the ambient manifold $M$ as in the local models shown in Figure~\ref{fig2}. 
A point of $P$ whose regular neighborhood is shaped on the model~(iii) is called a {\it vertex of $\ell$-type} and that on the model~(iv) is a {\it vertex of $r$-type}.

\begin{figure}[htbp]
\begin{center}
\includegraphics[width=14.0cm, bb=129 649 587 712]{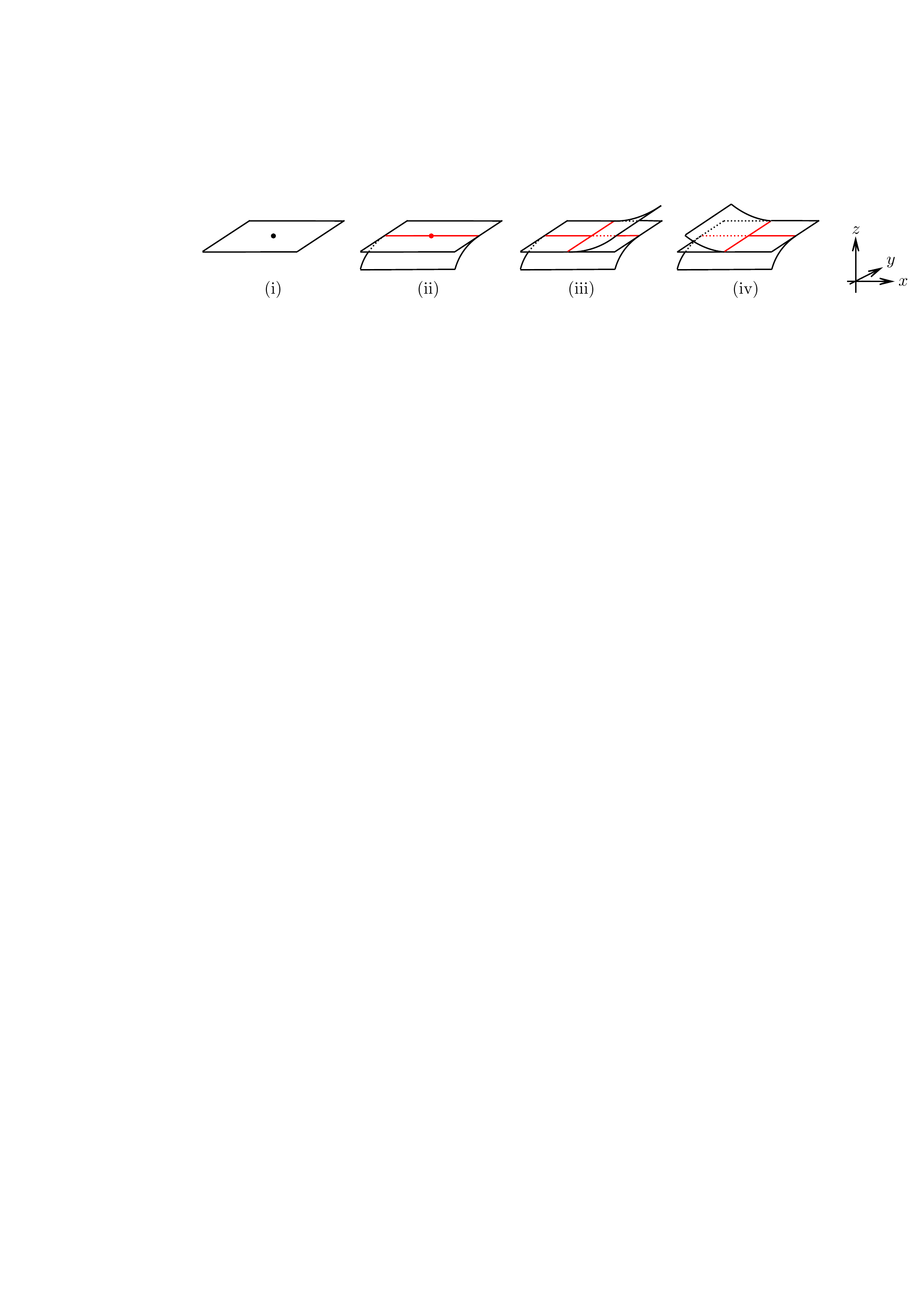}
\caption{The local models of a branched simple polyhedron.}\label{fig2}
\end{center}
\end{figure}

\begin{definition}
Let $M$ be a closed, connected, oriented $3$-manifold.
\begin{itemize}
\item[(1)] Let $(M,\mathcal F)$ be a pair of $M$ and a non-singular flow $\mathcal F$ on $M$. A simple spine $P$ of $M$ is called a {\it flow-spine} of $(M,\mathcal F)$ if, for each point $p\in P$, there exists a positive coordinate chart $(U; x,y,z)$ of $M$ around $p$ such that $(U,p)$ is one of the models in Figure~\ref{fig2}, where $\mathcal F|_U$ is generated by $\frac{\partial}{\partial z}$, and $\mathcal F|_{M\setminus P}$ is a constant vertical flow shown in Figure~\ref{fig2-2}.
\item[(2)] A branched simple spine $P$ of $M$ is called a {\it flow-spine} of $M$ if it is a flow-spine of $(M,\mathcal F)$ for some non-singular flow $\mathcal F$ on $M$.
The flow $\mathcal F$ is said to be {\it carried by $P$}.
\end{itemize}
\end{definition}


\begin{figure}[htbp]
\begin{center}
\includegraphics[width=4.5cm, bb=177 609 353 713]{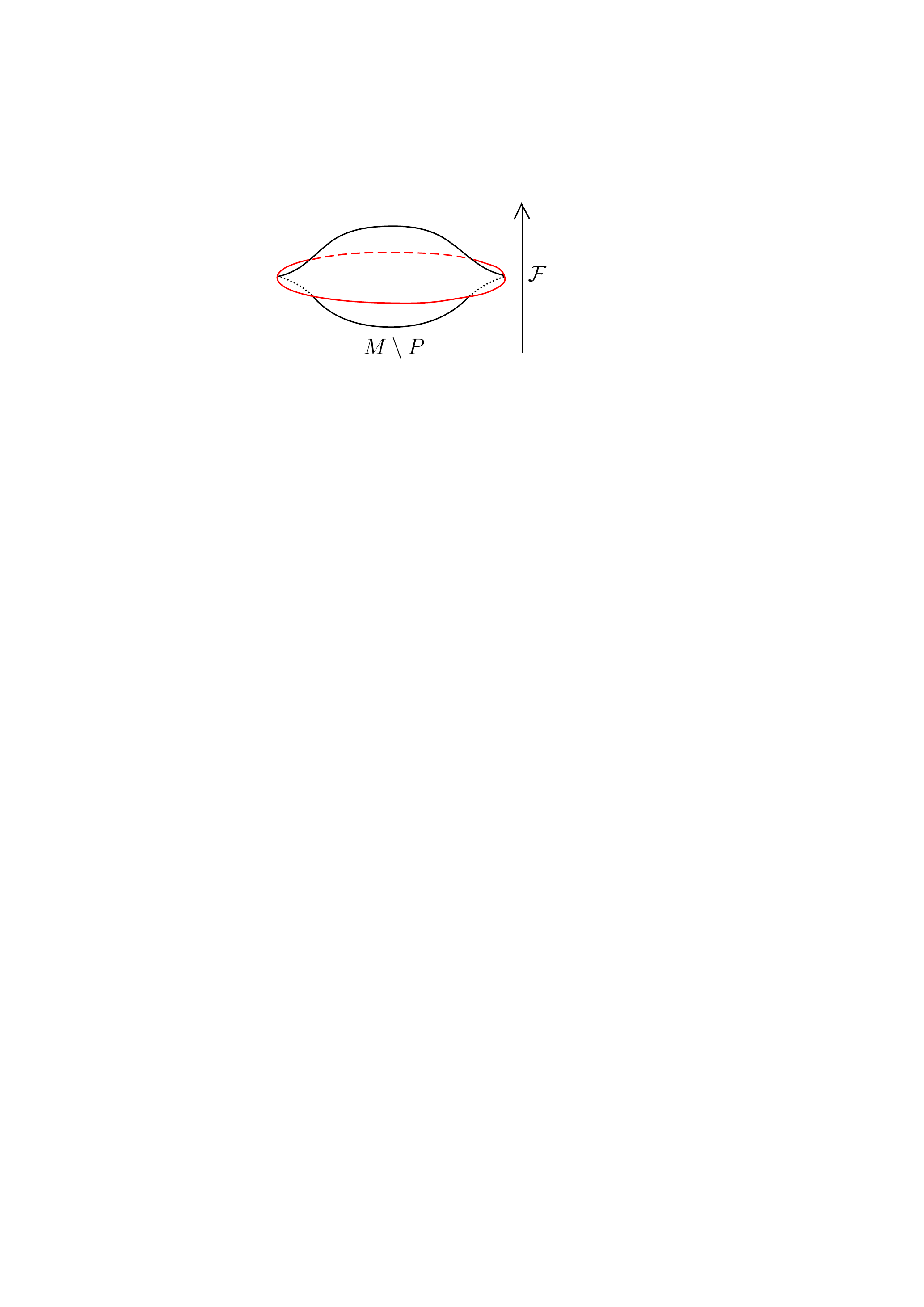}
\caption{The flow in the complement $M\setminus P$.}\label{fig2-2}
\end{center}
\end{figure}

\subsection{Regular moves}

In this subsection, we introduce two kinds of moves of branched simple polyhedra.

\begin{definition}
\begin{itemize}
\item[(1)] The moves shown in Figure~\ref{fig3-7} and their mirrors are called  {\it first regular moves}~\cite{Ish92}
or {\it Matveev-Piergallini moves}~\cite{BP97}.
\item[(2)] The moves shown in Figure~\ref{fig3-8} and their mirrors are called {\it second regular moves}~\cite{Ish92} or {\it lune moves}~\cite{Mat03}.
\end{itemize}
\end{definition}

\begin{figure}[htbp]
\begin{center}
\includegraphics[width=9cm, bb=129 526 536 713]{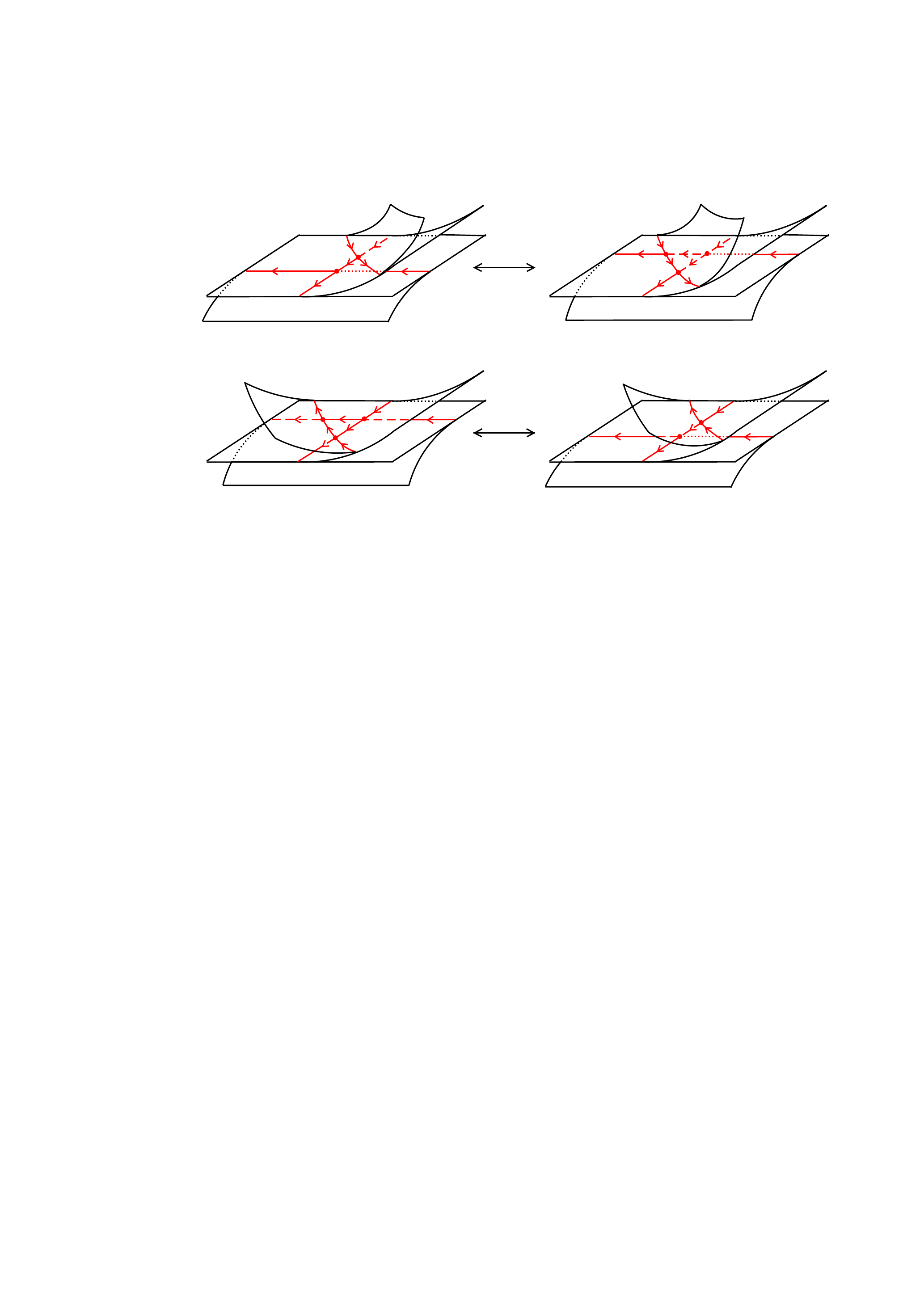}
\caption{First regular moves.}\label{fig3-7}
\end{center}
\end{figure}

\begin{figure}[htbp]
\begin{center}
\includegraphics[width=11cm, bb=129 605 475 712]{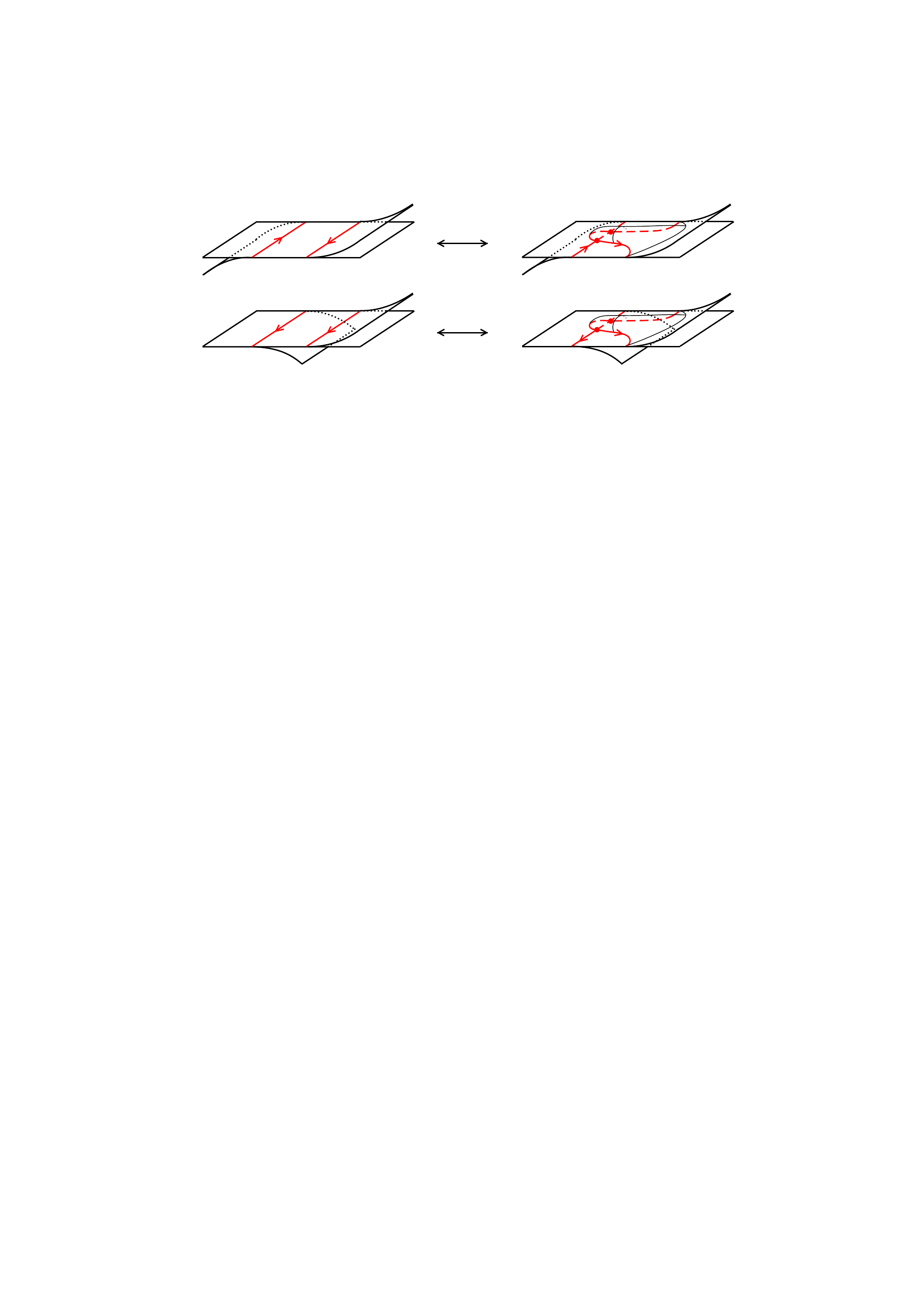}
\caption{Second regular moves.}\label{fig3-8}
\end{center}
\end{figure}

These moves are enough to trace deformation of non-singular flows in $3$-manifolds.
Two non-singular flows $\mathcal F_1$ and $\mathcal F_2$ in a closed, orientable $3$-manifold are said to be {\it homotopic} 
if there exists a smooth deformation from $\mathcal F_1$ to $\mathcal F_2$ in the set of non-singular flows.

\begin{theorem}[\cite{Ish92}]\label{thm_ishii}
Let $P_1$ and $P_2$ be flow-spines of a closed, orientable $3$-manifold $M$.
Let $\mathcal F_1$ and $\mathcal F_2$ be flows on $M$ carried by $P_1$ and $P_2$, respectively. 
Suppose that $\mathcal F_1$ and $\mathcal F_2$ are homotopic.
Then $P_1$ is obtained from $P_2$ by applying first and second regular moves successively.
\end{theorem}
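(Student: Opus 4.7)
The strategy is to promote a smooth homotopy of non-singular flows $\{\mathcal F_t\}_{t\in[0,1]}$ with $\mathcal F_0=\mathcal F_1$ and $\mathcal F_1=\mathcal F_2$ to a parameterized family of flow-spines $\{P_t\}$ carrying $\mathcal F_t$, whose isotopy type is locally constant in $t$ except at finitely many critical values, at which the transition is precisely a first or second regular move. Combined with the fact that any two flow-spines carrying a \emph{fixed} flow differ by regular moves (which will be established as a preliminary lemma), this yields the theorem.

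First I would reformulate the data defining a flow-spine in a way amenable to deformation. A flow-spine $P$ of $(M,\mathcal F)$ is determined by its complementary ball $B=M\setminus P$, equipped with the identification on $\partial B$ induced by following the flow. Equivalently, one fixes a small embedded transverse disk $D$ and the first-return structure of $\mathcal F$ to $D$, from which $P$ is recovered as the image under the flow of $\partial D$ together with $D$ itself, suitably branched. The preliminary lemma to establish is then: for a fixed non-singular flow $\mathcal F$, any two choices of such auxiliary transverse disks produce flow-spines related by a finite sequence of first and second regular moves. This is proved by interpolating between the two disks through a generic $1$-parameter family of transverse disks and analyzing, locally along the flow, the bifurcations of the associated branched polyhedron.

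With this lemma in hand, choose the smooth family $\{\mathcal F_t\}$ and a family of auxiliary transverse disks $\{D_t\}$ varying smoothly in $t$, which is possible on the complement of a finite subset of parameter values by openness and density of transversality. On each subinterval between critical values, the associated flow-spines $P_t$ and $P_{t'}$ are related by an ambient isotopy supported in the region where $\mathcal F_t$ deforms, because the combinatorial type of $(D_t,\mathcal F_t)$ is locally constant. At a critical $t_0$, either $D_t$ loses transversality to $\mathcal F_t$ tangentially along an arc (the codimension-$1$ event responsible for a vertex sliding across a region, yielding a first regular move), or the first-return structure develops a tangency that creates or cancels a pair of true vertices on an edge (yielding a second regular move). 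A direct check on the local models in Figure~\ref{fig2} confirms that these are the only codimension-$1$ bifurcations compatible with the flow-spine structure, since away from $S(P_t)$ the configuration is a smooth surface transverse to a non-singular flow and is therefore structurally stable.

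The main obstacle will be the classification of codimension-$1$ bifurcations. Two subtleties must be handled: ruling out global or simultaneous transitions by a generic perturbation of the chosen homotopy $\{(\mathcal F_t,D_t)\}$ within the space of pairs (flow, transverse branched disk); and matching endpoint data so that the family begins at $P_1$ and ends at $P_2$ on the nose rather than up to ambient isotopy, which is accomplished by using the preliminary lemma at $t=0$ and $t=1$ to bridge between the specified $P_i$ and the disk-derived flow-spine. Once these technical points are in place, the sequence of regular moves traversed as $t$ ranges over $[0,1]$ gives the required deformation from $P_2$ to $P_1$.
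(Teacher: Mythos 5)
You should first note that the paper does not prove this statement at all: Theorem~\ref{thm_ishii} is imported verbatim from Ishii's paper \cite{Ish92} (with the moves themselves defined there and in \cite{Ish86}), so the only meaningful comparison is with Ishii's original argument. Your outline does reconstruct the general shape of that argument: flow-spines are produced from a pair consisting of a non-singular flow and a compact disk transverse to it, the spine being the disk together with the orbit segments emanating from its boundary, and the theorem is approached by deforming such pairs through a generic one-parameter family and reading off the combinatorial transitions. In that sense the strategy is the right one.

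As a proof, however, the proposal has genuine gaps, and they sit exactly where the content of the theorem lies. First, ``a small embedded transverse disk'' is not the correct auxiliary datum: for the construction to yield a spine the disk must meet every orbit of the flow and satisfy genericity conditions on the first-return behaviour of the orbits of $\partial D$ (Ishii's normal pairs), and the claim that every flow-spine in the sense of this paper is recovered from such a disk is itself a statement that needs proof, not a reformulation one may assume. Second, and more seriously, the crux --- that the only codimension-one events in a generic family of such pairs are precisely those realizing first and second regular moves --- is dispatched by ``a direct check on the local models in Figure~\ref{fig2}''. That check cannot succeed as stated: the relevant degenerations are not local perturbations of the models of Figure~\ref{fig2} but global events along orbit segments of $\partial D$ (tangencies of these orbits with $D$ and with $S(P_t)$, collisions of triple points, divergence of return times when the disk momentarily fails to cut every orbit), and enumerating these events and showing that each resulting change of the polyhedron factors through the two regular moves is the bulk of \cite{Ish92}. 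The same analysis is silently assumed in your ``preliminary lemma'' (fixed flow, two different disks), so the lemma does not reduce the difficulty; it repackages it. Until the bifurcation classification is actually carried out, what you have is a plausible road map rather than a proof.
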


\subsection{Construction of a contact form from a positive flow-spine}\label{sec25}

In this subsection, we briefly recall the construction of a contact form on 
a closed, connected, oriented, smooth $3$-manifold $M$ from its positive flow-spine $P$ introduced in~\cite{IIKN1}.

For a positive flow-spine $P$ of $M$, set $Q=\Nbd (S(P);P)$ and 
let $\mathcal R_i$ be the connected component of $P\setminus \Int Q$ contained in 
a region $R_i$ of $P$.
Let $\Nbd(S(P);M)$ be a neighborhood of $S(P)$ in $M$,
choose a neighborhood $N_P=\Nbd(P;M)$ of $P$ in $M$ sufficiently thin with respect to $\Nbd(S(P);M)$,
and set $N_Q=\Nbd(P;M)\cap \Nbd(S(P);M)$. 
Then $M$ decomposes as
\[
   M=N_P\cup N_D=\left(N_Q\cup\bigcup_{i=1}^n N_{\mathcal R_i}\right)\cup N_D,
\]
where $N_{\mathcal R_i}$ is the connected component of the closure of $N_P\setminus N_Q$ containing $\mathcal R_i$, 
which is diffeomorphic to $\mathcal R_i\times [0,1]$, and $N_D$ is the closure of $M\setminus N_P$, which is a $3$-ball.
The boundary of $N_P$ consists of ``vertical part'' and ``horizontal part''.
The ``vertical part'' is the vertical face of $N_P$ shown on the right in Figure~\ref{fig7}, which constitutes an annulus. The union of the rest faces of $N_P$ is the ``horizontal part''.
We regard $N_D$ as $D^2\times [0,1]$, where $D^2$ is the unit $2$-disk, so that $\partial D^2\times [0,1]$ corresponds to the ``vertical part'' of $\partial N_P$.

\begin{figure}[htbp]
\begin{center}
\includegraphics[width=13.2cm, bb=129 617 472 711]{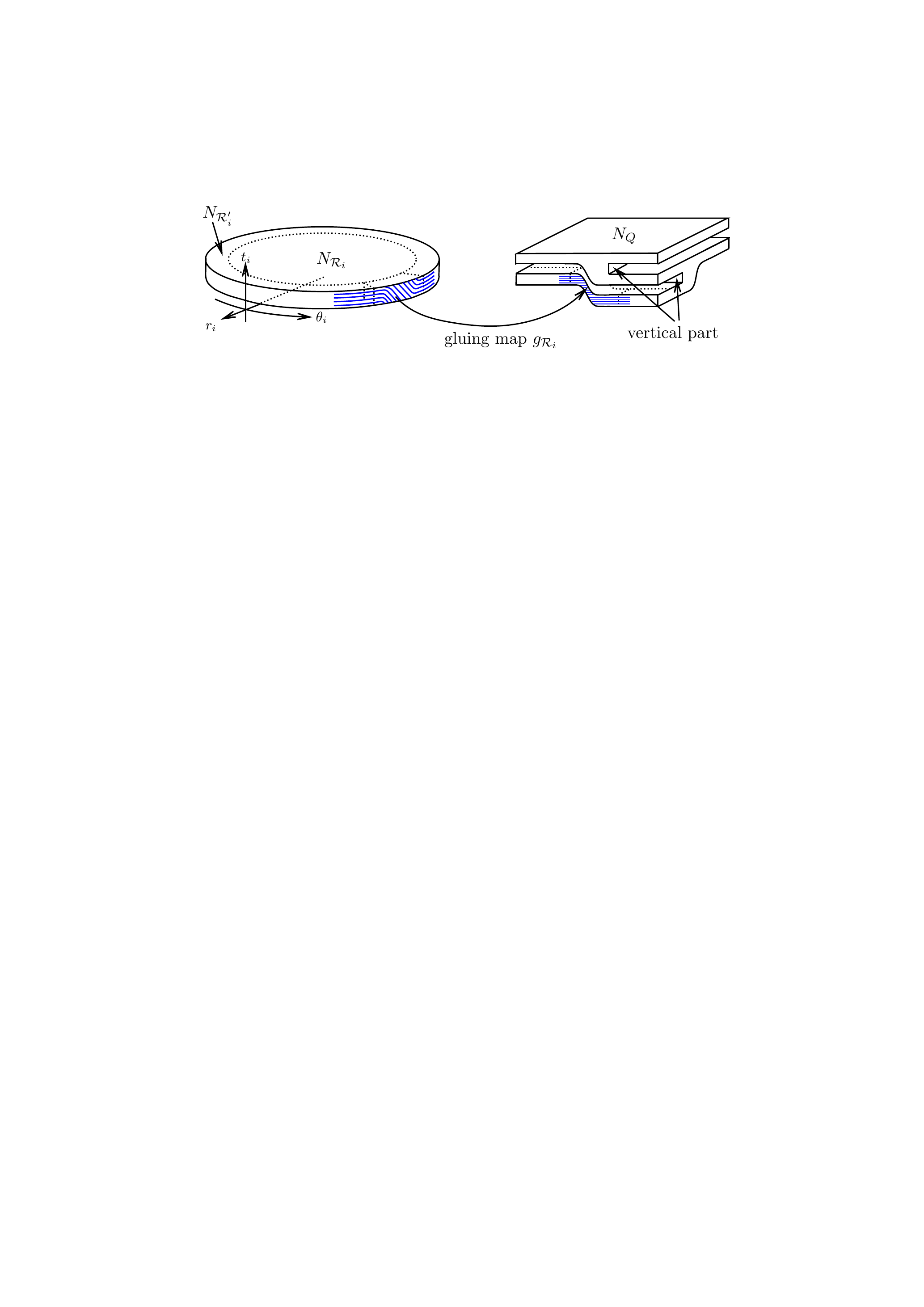}
\caption{Construction of $N_P$.}\label{fig7}
\end{center}
\end{figure}

A $1$-form $\eta$ on $M$ constructed in~\cite[Section 5]{IIKN1}, called a {\it reference $1$-form}, satisfies that
\begin{itemize}
\item[(a)] $\eta\land d\eta\geq 0$ on $M$ and
\item[(b)] $d\eta=0$ on $N_D\setminus (\hat W\cup\check W)$. 
\end{itemize}
Property~(a) is proved in \cite[Lemma 5.8]{IIKN1}.
Property~(b) follows from Lemma 5.6 (1), (2) and equation~(5.4) in~\cite{IIKN1}. 
We omit the definition of $\hat W\cup\check W$. This part appears in small neighborhoods of the orbits of the flow passing though the vertices of $P$, cf.~\cite[Fig. 18]{IIKN1}, and we only use this fact in the proof of Theorem~\ref{thm02}.

The next step is to construct a $1$-form $\beta$ on a positive flow-spine $P$ of $M$ such that $d\beta>0$.
Let $\pr_Q:Q\to Q'$ be a projection of $Q$
to an oriented, compact surface $Q'$ such that, for each edge $e$ of $P$, the two adjacent regions of $Q$ that induce the same orientation to $e$ are identified. 
Note that $Q'$ is homotopy equivalent to $S(P)$. 
A $1$-form on $P$ means that it is obtained by gluing the pullback of a $1$-form on $Q'$ 
by the projection $\pr_Q:Q\to Q'$ and $1$-forms on the regions $R_1,\ldots,R_n$ of $P$.
The existence of a $1$-form $\beta$ on $P$ with $d\beta>0$ is proved in~\cite[Lemma 6.3]{IIKN1}.

From this $\beta$, we make a $1$-form $\hat\beta$ on $N_P$.
Let $\pr_{\mathcal R_i}:N_{\mathcal R_i}=\mathcal R_i\times [0,1]\to \mathcal R_i$ be the first projection
and set $N_{\mathcal R_i'}=\Nbd(\partial\mathcal R_i;\mathcal R_i)\times [0,1]\subset 
\mathcal R_i\times [0,1]$. 
Let $g_{\mathcal R_i}:N_{\mathcal R_i'}\to N_Q$ be a gluing map of $N_{\mathcal R_i}$ to $N_Q$.
Choose coordinates $(r_i,\theta_i)$ on $\Nbd(\partial \mathcal R_i;\mathcal R_i)$ so that $\{r_i=0\}=\partial \Nbd(\partial \mathcal R_i;\mathcal R_i)\setminus \partial \mathcal R_i$, $\{r_i=1\}=\partial \mathcal R_i$ and the orientation of $(r_i,\theta_i)$ coincides with that of $\mathcal R_i$, see the left in Figure~\ref{fig7}.
Let $\pr_{N_Q}:N_Q\to Q$ be a canonical projection.
From the $1$-form $\beta$ on $P$,
we define the $1$-form $\hat\beta$ on $N_P$ by gluing $\pr_{N_Q}^*\beta$ on 
$N_Q\setminus\bigcup_{i=1}^ng_{\mathcal R_i}(N_{\mathcal R'_i})$ 
and
\[
   (1-\sigma(r_i))\pr_{\mathcal R_i}^*\beta+\sigma(r_i)(\pr_{N_Q}\circ g_{\mathcal R_i})^*\beta
\]
on $N_{\mathcal R_i}$\footnote{This is equation (6.1) in~\cite{IIKN1},
where a projection $\pr:N_Q\to Q'$ such that $\pr|_Q=\pr_Q$ is used instead of $\pr_{N_Q}$.
The positions of $(\pr\circ g_{\mathcal R_i})^*\beta$ and $\pr_{\mathcal R_i}^*\beta$ in equation (6.1) should be opposite. It does not affect the later discussion in~\cite{IIKN1}.}, 
where 
$\sigma(r_i)$ is a monotone increasing smooth function such that $\sigma(\varepsilon)=0$ and 
$\sigma(1-\varepsilon)=1$ for any sufficiently small $\varepsilon\geq 0$.
It is verified in~\cite[Lemma 6.6]{IIKN1} that $\hat\beta+R\eta$ is a contact form on $N_P$ whose Reeb vector field is positively transverse to $P$ for any $R>0$.

Next we extend the $1$-form $\hat\beta$ to $N_D=D^2\times [0,1]$.
We extend the coordinate $[0,1]$ slightly and set $N_D^\varepsilon=D^2\times [-\varepsilon,1+\varepsilon]$
for a sufficiently small $\varepsilon>0$.
Let $\hat\beta_E$ and $\hat\beta_F$ be the $1$-form $\hat\beta$ on $D^2\times [-\varepsilon,0]$ and 
$D^2\times [1,1+\varepsilon]$, respectively. Fix coordinates $(u,v,w)$ on $N_D^\varepsilon$ so that
the $1$-forms $\hat\beta_E$ and $\hat\beta_F$ are invariant on their defining domains under translation to the $w$-coordinate.
We then extend these forms to the whole $N_D$ canonically and denote them again by $\hat\beta_E$ and $\hat\beta_F$.
Now we define a $1$-form $\hat\beta_M$ on $M$ by extending $\hat\beta$ on $N_P$ to $N_D=D^2\times [0,1]$ as
\[
   \hat\beta_M=(1-\tau(w))\hat\beta_E+\tau(w)\hat\beta_F,
\]
where $\tau:[-\varepsilon,1+\varepsilon]\to [0,1]$ is a monotone increasing smooth function such that 
$\tau(w)=0$ for $-\varepsilon\leq w\leq 0$ and $\tau(w)=1$ for $1\leq w\leq 1+\varepsilon$.
The contact form on $M$ asserted in~\cite[Theorem 1.1]{IIKN1} is given as $\hat\beta_M+R\eta$
for a sufficiently large $R>0$.




\section{Proof of Theorem~\ref{thm02}}

In this section, we give the proof of Theorem~\ref{thm02}. 

\begin{definition}
A vector field $Z$ 
on $M$ is called a {\it monodromy vector field} of 
an 
open book $(M, \Sigma, L, \phi)$ if 
\begin{itemize}
\item $Z$ is tangent to the binding $L$ and the orientation on $L$ induced from $\Sigma$ coincides with the direction of $Z$, and
\item $Z$ is positively transverse to $\Int\Sigma\times\{t\}$ for any $t\in [0,1]$.
\end{itemize}
\end{definition}

In particular, the Reeb vector field used in the definition of the contact structure supported by an open book in Section~\ref{sec1} is a monodromy vector field of the open book. 

We first prove the following proposition.

\begin{proposition}\label{prop61}
Let $(M, \Sigma, L, \phi)$ be an open book decomposition of 
a closed, connected, oriented $3$-manifold $M$ whose page is a surface of genus at least $2$ and whose binding is connected.
Then there exists a positive flow-spine on $M$ that carries the flow generated by a monodromy vector field of the open book.
\end{proposition}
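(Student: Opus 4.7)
\medskip
\noindent\textbf{Proof plan for Proposition~\ref{prop61}.}
The plan is to build the positive flow-spine $P$ explicitly from the open book data, then verify the four conditions: (i) $P$ is a simple spine of $M$, (ii) $P$ admits a branching, (iii) $P$ is everywhere transverse to a monodromy vector field $Z$ with the local models of Figure~\ref{fig2}, and (iv) every true vertex is of $\ell$-type.

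First I would set up coordinates adapted to the open book. Fix a tubular neighborhood $N(L)\cong L\times D^{2}$ of the binding and identify $M\setminus\Int N(L)$ with the mapping torus $(\Sigma\times[0,1])/(x,1)\sim(\phi(x),0)$. Pick a monodromy vector field $Z$ which equals $\partial/\partial t$ on the mapping-torus part and is smoothly interpolated across $\partial N(L)$ to a vector field on $N(L)$ that is tangent to $L$ and positively transverse to the half-pages $L\times(\text{radial arcs})$. Next, because $g\ge 2$ and $\partial\Sigma=L$ is connected, I can choose a cut system $\alpha_{1},\dots,\alpha_{2g}$ of pairwise disjoint properly embedded arcs in $\Sigma$ whose complement is a single open disk $D$.

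The candidate polyhedron is obtained from one page $\Sigma_{0}=\Sigma\times\{0\}$ by a controlled local modification along each cut arc. For each $\alpha_i$ I would choose a thin rectangular neighborhood $R_{i}\subset \Sigma$ of $\alpha_{i}$ and replace the portion $R_{i}\times\{0\}$ of $\Sigma_{0}$ by a ``branching bridge'': the two halves of $R_{i}$ are pushed slightly forward and backward along the flow of $Z$ (lying in nearby pages $\Sigma_{\pm\varepsilon_{i}}$), and they are reconnected along $\alpha_{i}$ by a small sheet transverse to $Z$. The attaching direction of the bridge is prescribed by the orientation of $\Sigma$. Near the binding I glue in a short annular collar inside $N(L)$ that is transverse to $Z$ and realises the correct radial-rotation model of the monodromy flow. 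Call the resulting $2$-complex $P$.

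Verifying (i)–(iii) is then essentially local: away from the branching bridges $P$ is a smooth surface transverse to $Z$, at the singular arcs three oriented sheets meet as in model (ii) of Figure~\ref{fig2}, and at crossings of the $\alpha_i$ with each other (after isotoping them transverse in the disk $D$) three such arcs concur and produce the true vertices. That $M\setminus P$ is an open $3$-ball follows because cutting along $P$ first separates $M$ along the page (yielding $\Sigma\times[0,1]$ up to the collaring) and then separates the product along the arcs $\alpha_{i}$, leaving $D\times[0,1]\cong B^{3}$. The branching is determined by pushing all regions forward in time along $Z$, and the flow carried by the resulting branched polyhedron is homotopic to $Z$ by construction, hence (by Theorem~\ref{thm_ishii} applied to perturbations if needed) $P$ carries $Z$.

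The main obstacle is condition (iv), the positivity. At each crossing of two bridges the choice between $\ell$-type and $r$-type is determined by the combined sign of (a) the transverse orientation of the two sheets coming from the pushing directions $\pm\varepsilon_{i}$ and (b) the local orientation of the crossing $\alpha_{i}\cap \alpha_{j}$ in $\Sigma$. The plan is to exploit the freedom $g\ge 2$ to choose the cut system $\{\alpha_{i}\}$ so that the arcs meet the disk $D$ in a pattern where one may assign consistently one of the two signs $\pm\varepsilon_{i}$ to each arc so that every crossing comes out $\ell$-type. Once this combinatorial arrangement is achieved, the resulting $P$ is a positive flow-spine of $M$ carrying $Z$, and the proposition follows.
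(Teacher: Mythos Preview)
Your plan has a fundamental gap: the construction never sees the monodromy $\phi$.  You start from a single page $\Sigma_{0}$ and modify it along a cut system of \emph{pairwise disjoint} arcs $\alpha_{1},\dots,\alpha_{2g}$; nothing in the branching bridges, the collar near $L$, or the sign choices $\pm\varepsilon_{i}$ depends on $\phi$.  But whether $M\setminus P$ is a single ball depends entirely on $\phi$.  Your argument ``cut along the page to get $\Sigma\times[0,1]$, then cut along the arcs to get $D\times[0,1]$'' does not work: after you cut $M$ along $\Sigma_{0}$ the two boundary copies are re-glued by $\phi$, and the bridges you inserted lie only in a thin product neighborhood of one page, so they do not cut $\Sigma\times[0,1]$ vertically.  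For a generic $\phi$ the complement of your $P$ will not be a ball and $P$ is not a spine of $M$ at all.  The paper's proof addresses exactly this by writing $\phi$ as a product of Dehn twists $w_{1}^{m_{1}}\cdots w_{n}^{m_{n}}$ along a fixed system of curves, placing $6n$ pages around $S^{1}$, and inserting between consecutive pages explicit pieces $G_{j}\times[0,1]$ designed so that (a) each Dehn twist is realised by a local modification and (b) every new vertex is of $\ell$-type.  The monodromy is encoded piece by piece; this is the key idea your plan is missing.

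There are also internal inconsistencies.  You take the $\alpha_{i}$ pairwise disjoint, so there are no crossings $\alpha_{i}\cap\alpha_{j}$ and hence no true vertices of the kind you later invoke; the sentence ``after isotoping them transverse in the disk $D$'' contradicts the disjointness you just imposed.  The appeal to Theorem~\ref{thm_ishii} is also misplaced: that theorem relates two flow-spines carrying homotopic flows by regular moves; it does not show that a given branched polyhedron carries a given flow.  Finally, the positivity step (``exploit the freedom $g\ge2$ to assign signs $\pm\varepsilon_{i}$ so that every crossing is $\ell$-type'') is asserted without any mechanism; in the paper positivity is obtained by an explicit combinatorial design of the graphs on each $\Sigma_{j}$ and of the pieces $P_{i,j}$ near the binding, checked vertex by vertex.
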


\begin{proof}
Let $g$ be the genus of $\Sigma$ and
$a_1,\ldots,a_g$, $b_1,\ldots,b_{g-1}$, $c_1,\ldots,c_g$ be the simple closed curves
on the page as in Figure~\ref{fig30}.
Let $\sigma_{a_i}$, $\sigma_{b_i}$ and $\sigma_{c_i}$ 
denote the right-handed Dehn twists along $a_i$, $b_i$ and $c_i$, respectively.

\begin{figure}[htbp]
\begin{center}
\includegraphics[width=11.0cm, bb=130 632 463 713]{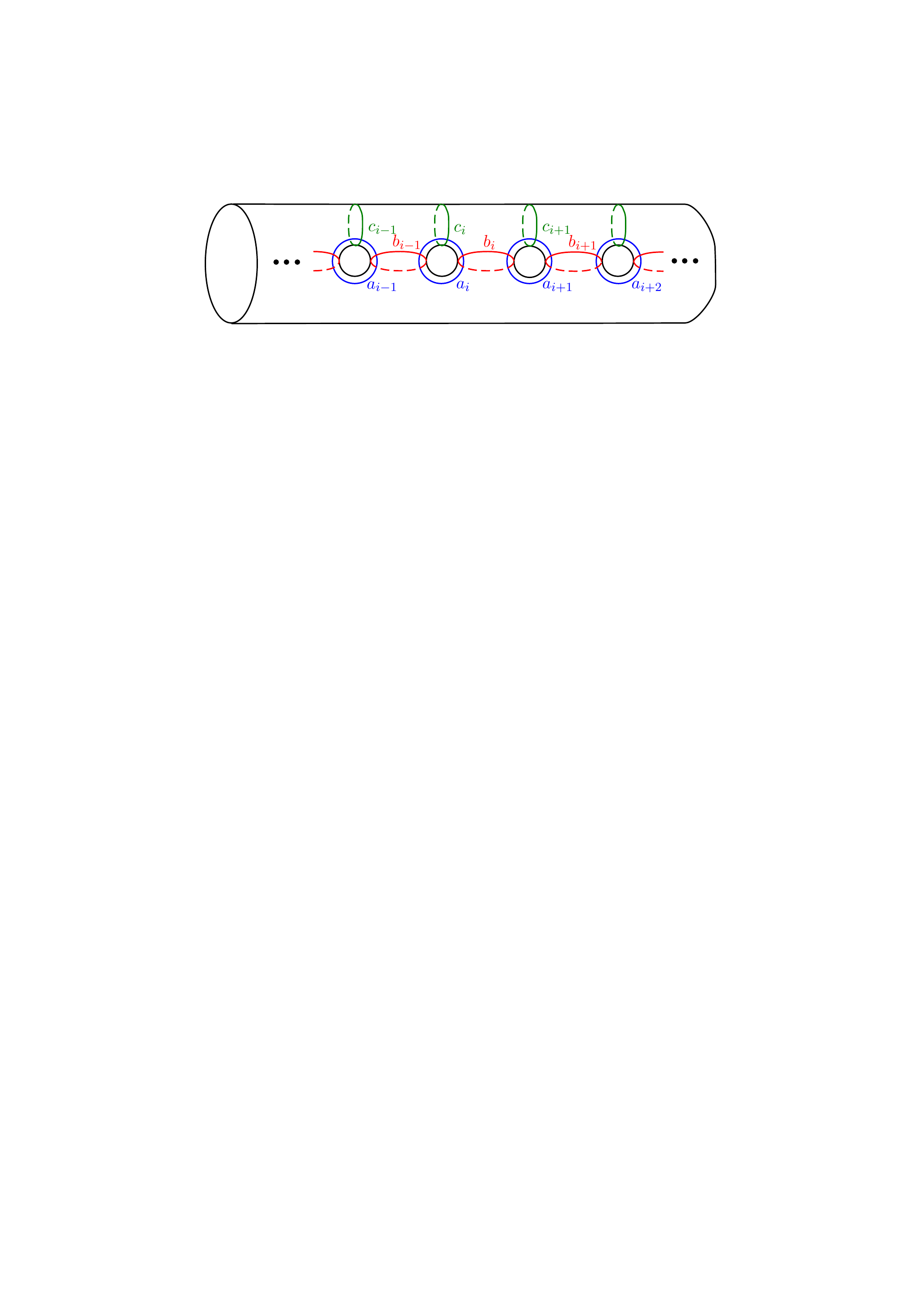}
\caption{Curves for Dehn twists.}\label{fig30}
\end{center}
\end{figure}

We represent the monodromy of the open book by a product of Dehn twists as
$w_1^{m_1}w_2^{m_2}\cdots w_n^{m_n}$,
where $w_k$ is the Dehn twist along one of the curves $a_1,\ldots,a_g$, $b_1,\ldots,b_{g-1}$, $c_1,\ldots,c_g$. Here we read the product from left to right.
Let $f:M\setminus \Int \Nbd(L;M)\to S^1$ be a smooth non-singular map of the open book. 
Choose $6n$ points $q_1,\ldots,q_{6n}$ on $S^1$ in the counter-clockwise order and set $\Sigma_j=f^{-1}(q_j)$.
We assign to each $\Sigma_j$ the orientation as a page of the open book.

We first draw a graph, which will become $S(P)$, on each $\Sigma_j$ as shown 
in Figures~\ref{fig31},~\ref{fig32}, \ref{fig33}, \ref{fig34}, \ref{fig35} and \ref{fig36}, 
depending on $j$ modulo $6$.

\begin{figure}[htbp]
\begin{center}
\includegraphics[width=10.0cm, bb=130 596 463 713]{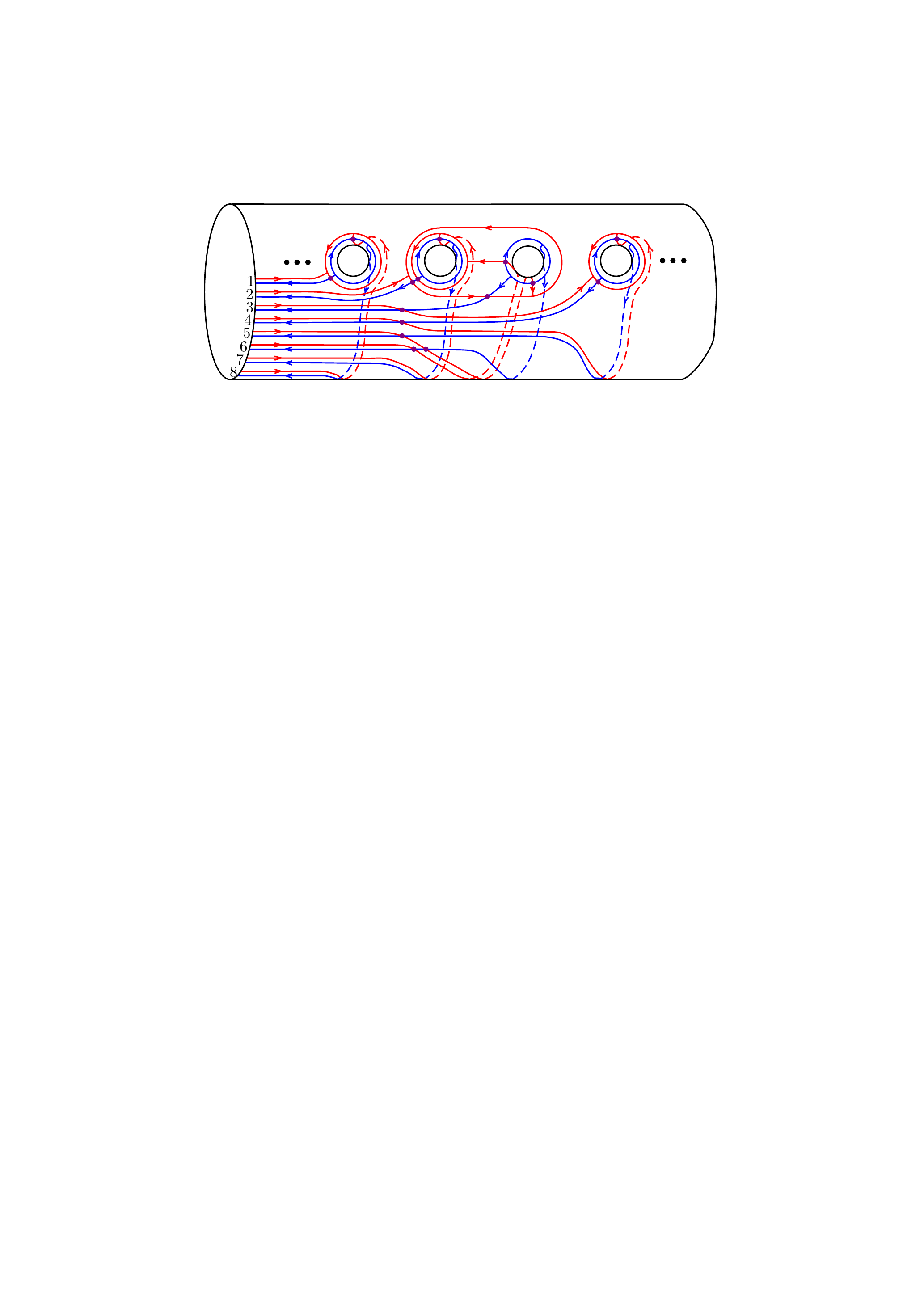}
\caption{The singular set $S(P)$ on $\Sigma_{6k}$.}\label{fig31}
\end{center}
\end{figure}

\begin{figure}[htbp]
\begin{center}
\includegraphics[width=10.0cm, bb=130 596 463 713]{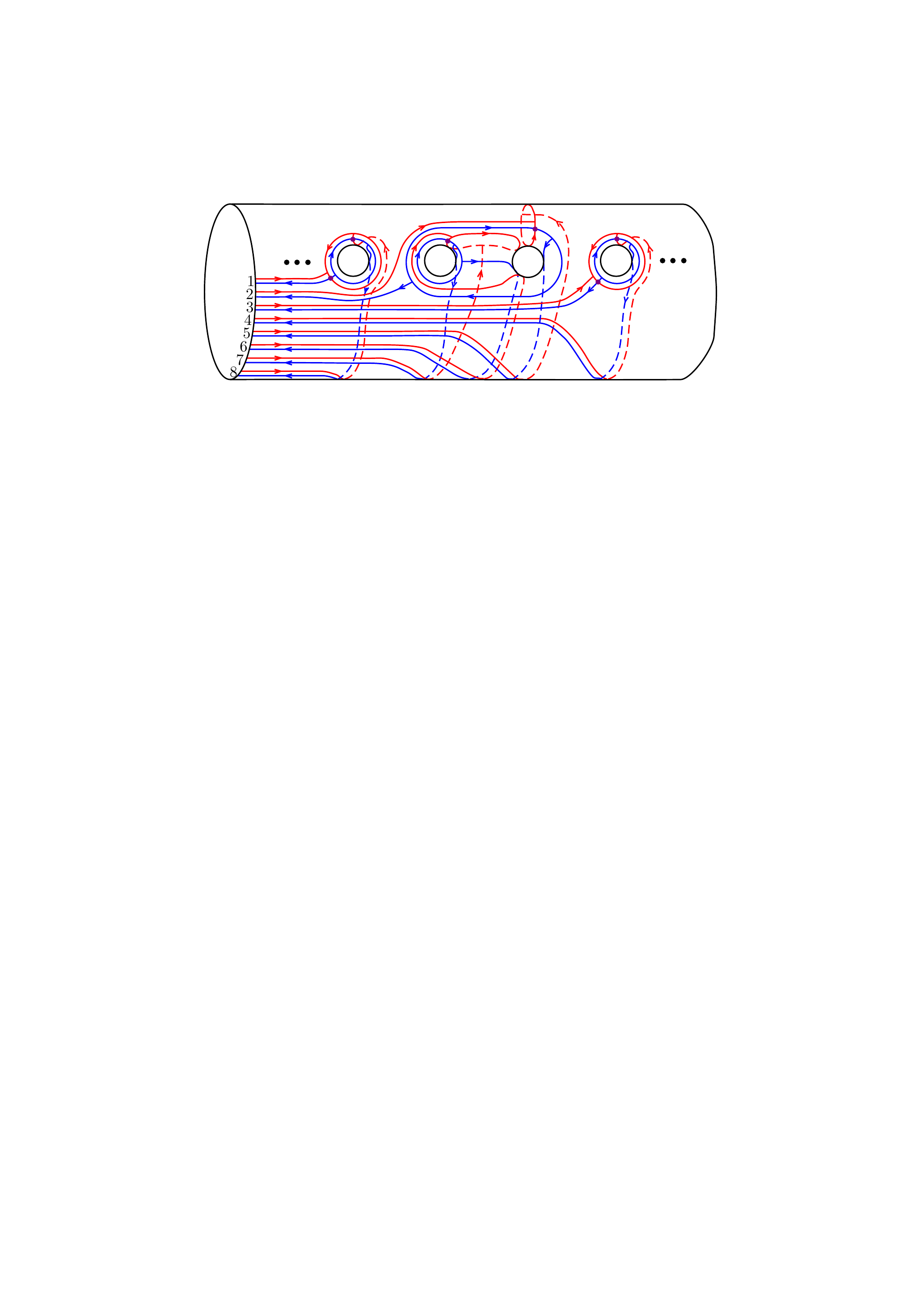}
\caption{The singular set $S(P)$ on $\Sigma_{6k+1}$.}\label{fig32}
\end{center}
\end{figure}

\begin{figure}[htbp]
\begin{center}
\includegraphics[width=10.0cm, bb=130 596 463 713]{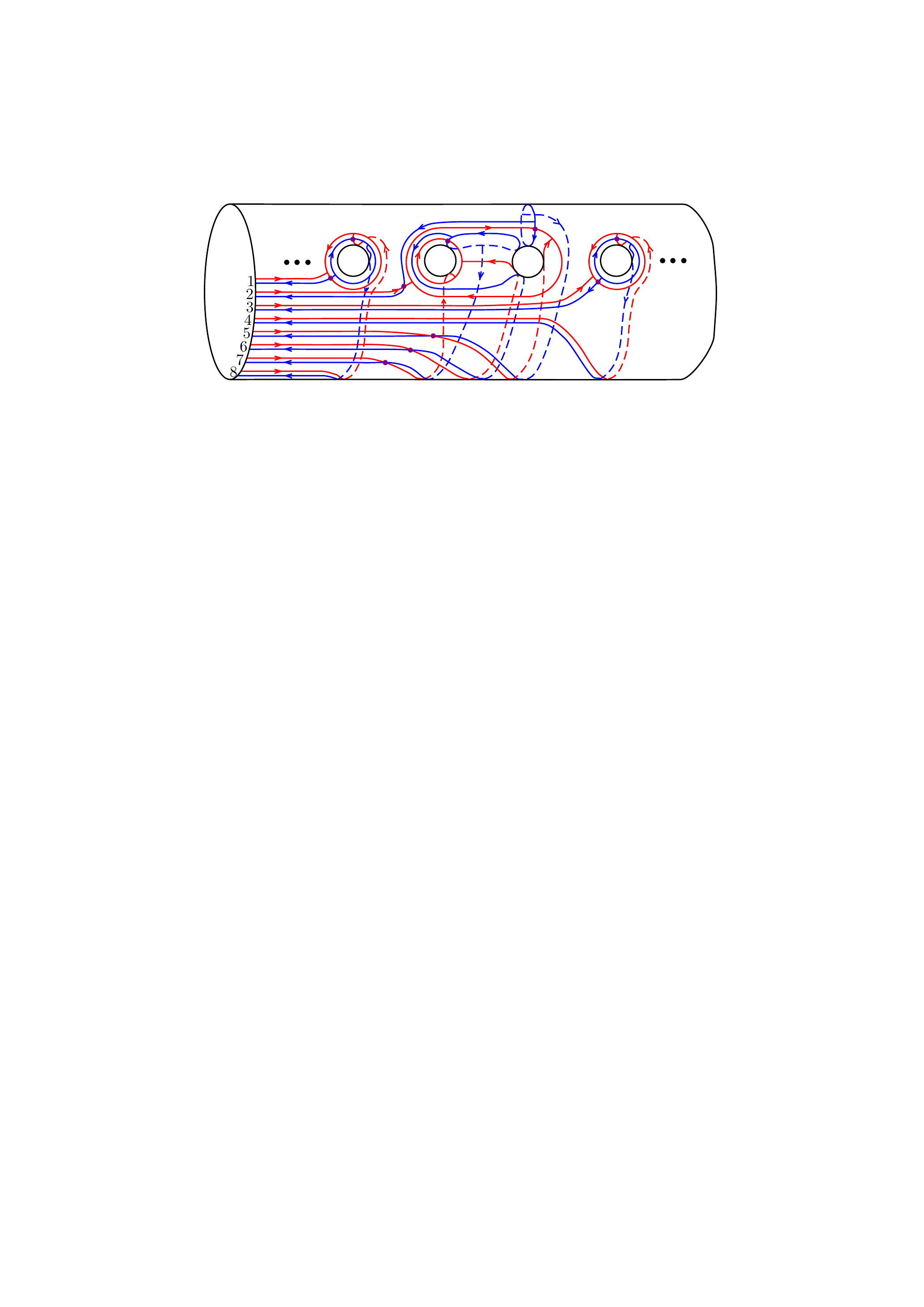}
\caption{The singular set $S(P)$ on $\Sigma_{6k+2}$.}\label{fig33}
\end{center}
\end{figure}

\begin{figure}[htbp]
\begin{center}
\includegraphics[width=10.0cm, bb=130 595 463 712]{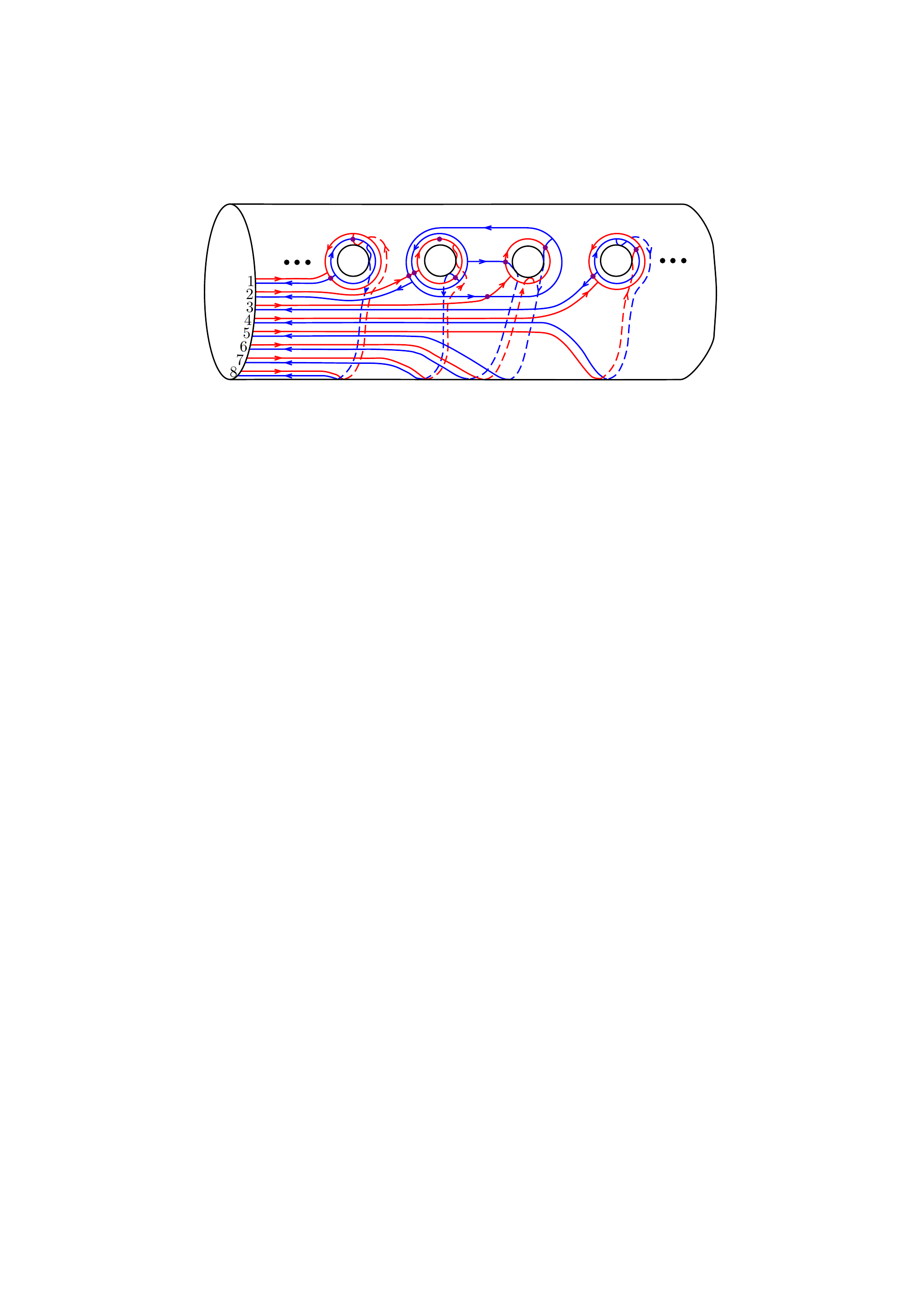}
\caption{The singular set $S(P)$ on $\Sigma_{6k+3}$.}\label{fig34}
\end{center}
\end{figure}

\begin{figure}[htbp]
\begin{center}
\includegraphics[width=10.0cm, bb=130 596 463 713]{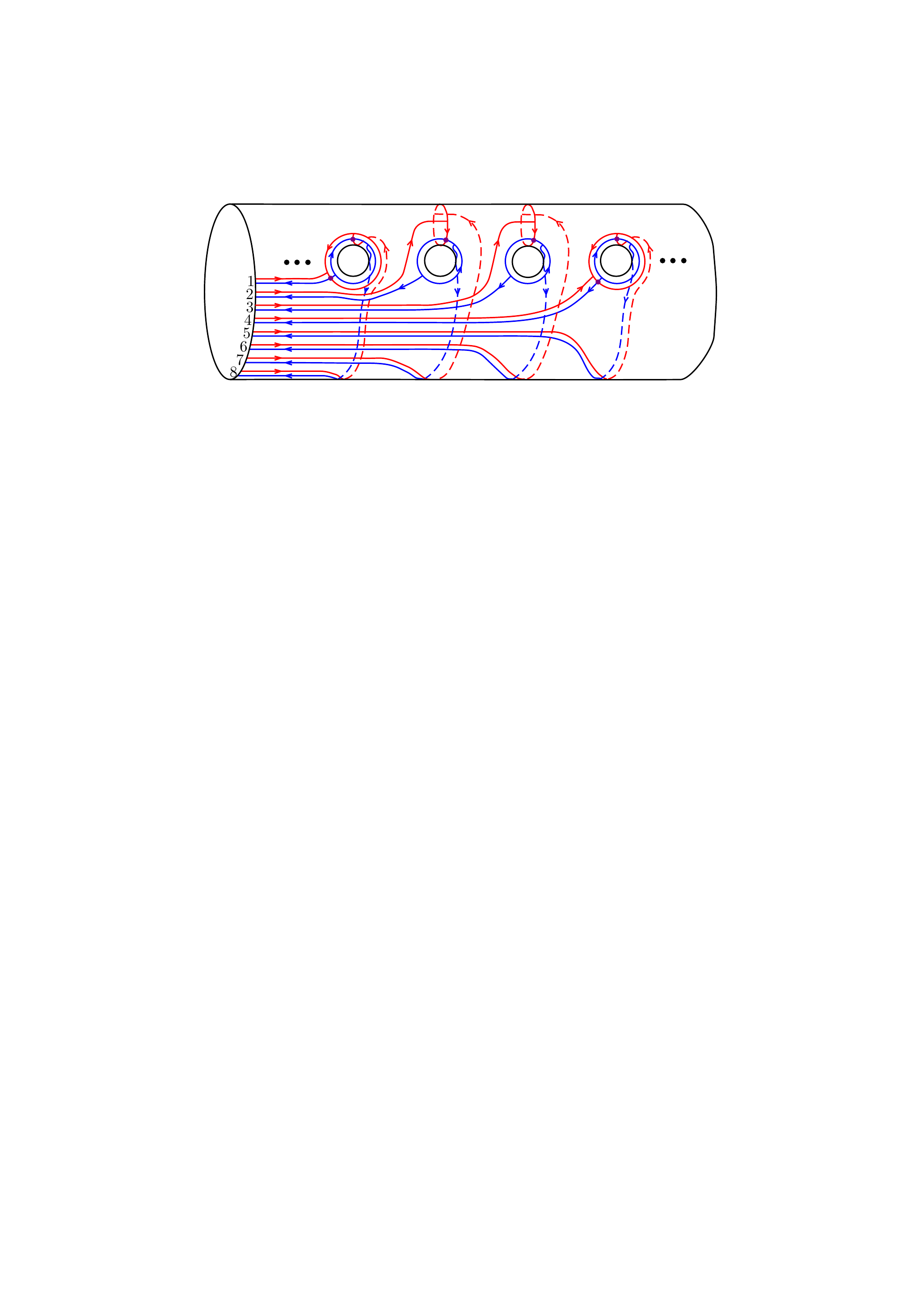}
\caption{The singular set $S(P)$ on $\Sigma_{6k+4}$.}\label{fig35}
\end{center}
\end{figure}

\begin{figure}[htbp]
\begin{center}
\includegraphics[width=10.0cm, bb=130 596 463 713]{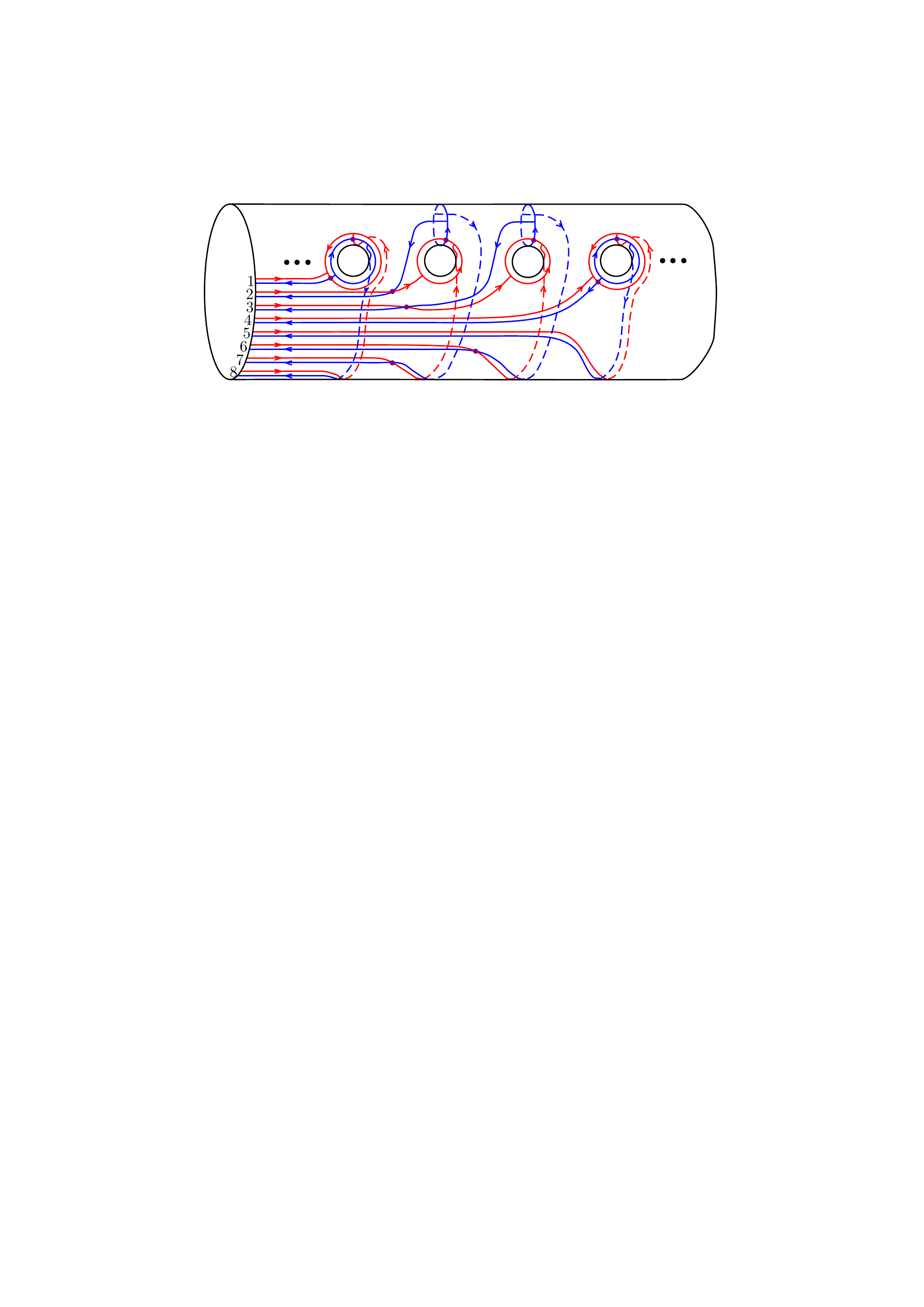}
\caption{The singular set $S(P)$ on $\Sigma_{6k+5}$.}\label{fig36}
\end{center}
\end{figure}

For each $j=1,\ldots,6n$, the red graph $G_j^{red}$ 
on $\Sigma_j$ and the blue graph $G_j^{blue}$ on $\Sigma_{j+1}$ coincide as unoriented graphs up to isotopy,
which we denote by $G_j$,
and their orientations are opposite. Here we regard $\Sigma_{6n+1}=\Sigma_1$.
We attach $G_j\times [0,1]$ between $\Sigma_j$ and $\Sigma_{j+1}$ such that
$G_j\times\{0\}=G_j^{red}$ and $G_j\times\{1\}=G_j^{blue}$.
To each region of $G_j\times[0,1]$ we assign an orientation so that the induced orientation on the boundary coincides with those of the edges of $G_j^{red}$ and $G_j^{blue}$.
See Figure~\ref{fig38} for the orientations on $G_j\times[0,1]$.
A vertex of this branched polyhedron can be seen as either an intersection of a red edge and a blue edge on $\Sigma_j$ or a trivalent vertex on $\Sigma_j$. 
Any vertex in the former case is of $\ell$-type since the blue edge intersects the red one from the left to the right with respect to the orientation of the red edge.
Other vertices are those appearing in Figure~\ref{fig38} and we can verify that they are of $\ell$-type in the figure directly.

\begin{figure}[htbp]
\begin{center}
\includegraphics[width=6.0cm, bb=175 589 365 713]{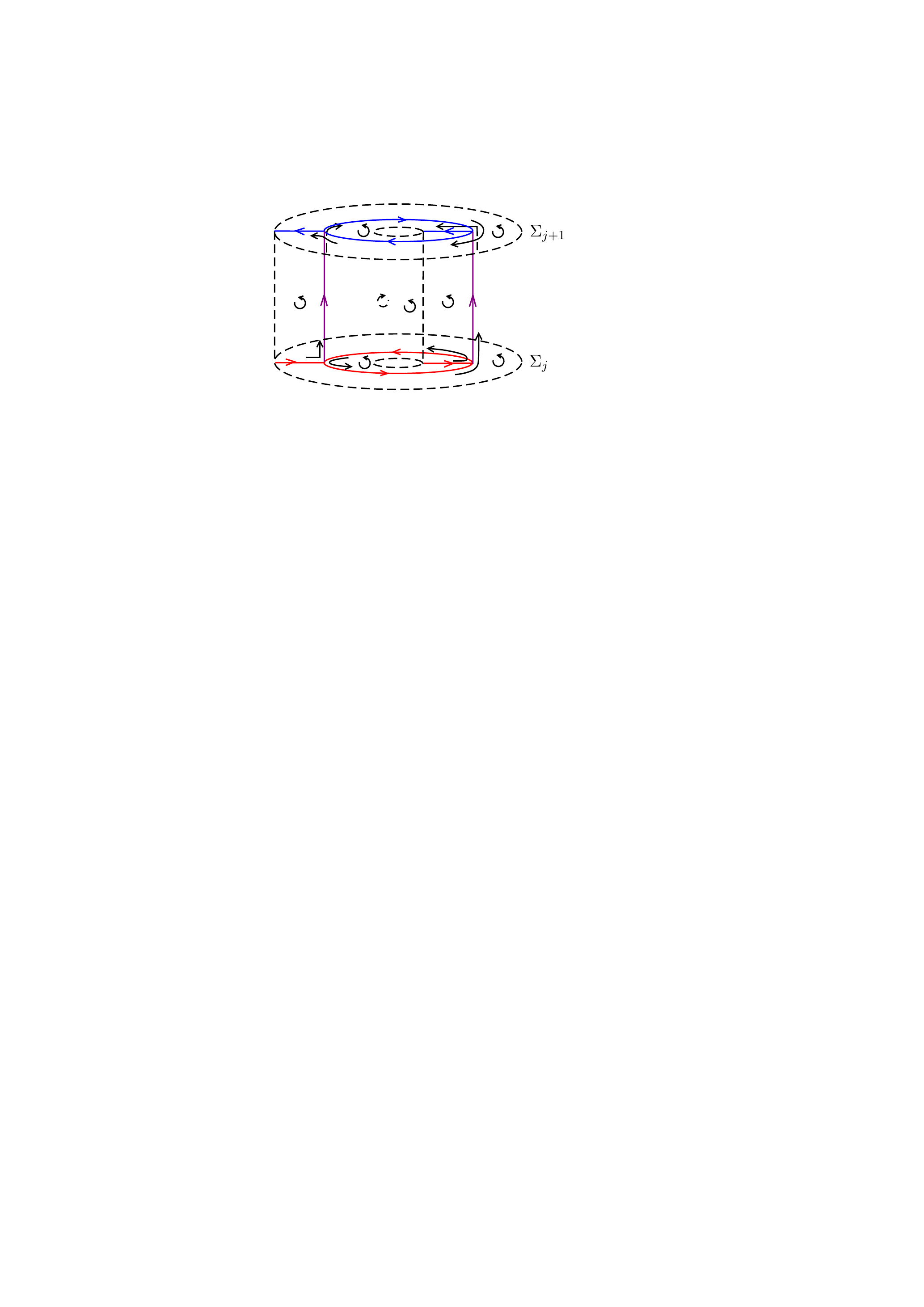}
\caption{Attach $G_j\times [0,1]$. Black arrows in the figure show the smooth directions of the immersion of $S(P)$ at the vertices.}\label{fig38}
\end{center}
\end{figure}

If $w_k=\sigma_{a_p}^{m_p}$ then
we modify the $p$-th blue circle on $\Sigma_{6k+1}$ as shown on the left in Figure~\ref{fig38-1} if $m_p>0$ and as shown on the right if $m_p<0$.
We may verify that all vertices appearing by this modification are of $\ell$-type.

\begin{figure}[htbp]
\begin{center}
\includegraphics[width=12.0cm, bb=139 616 447 713]{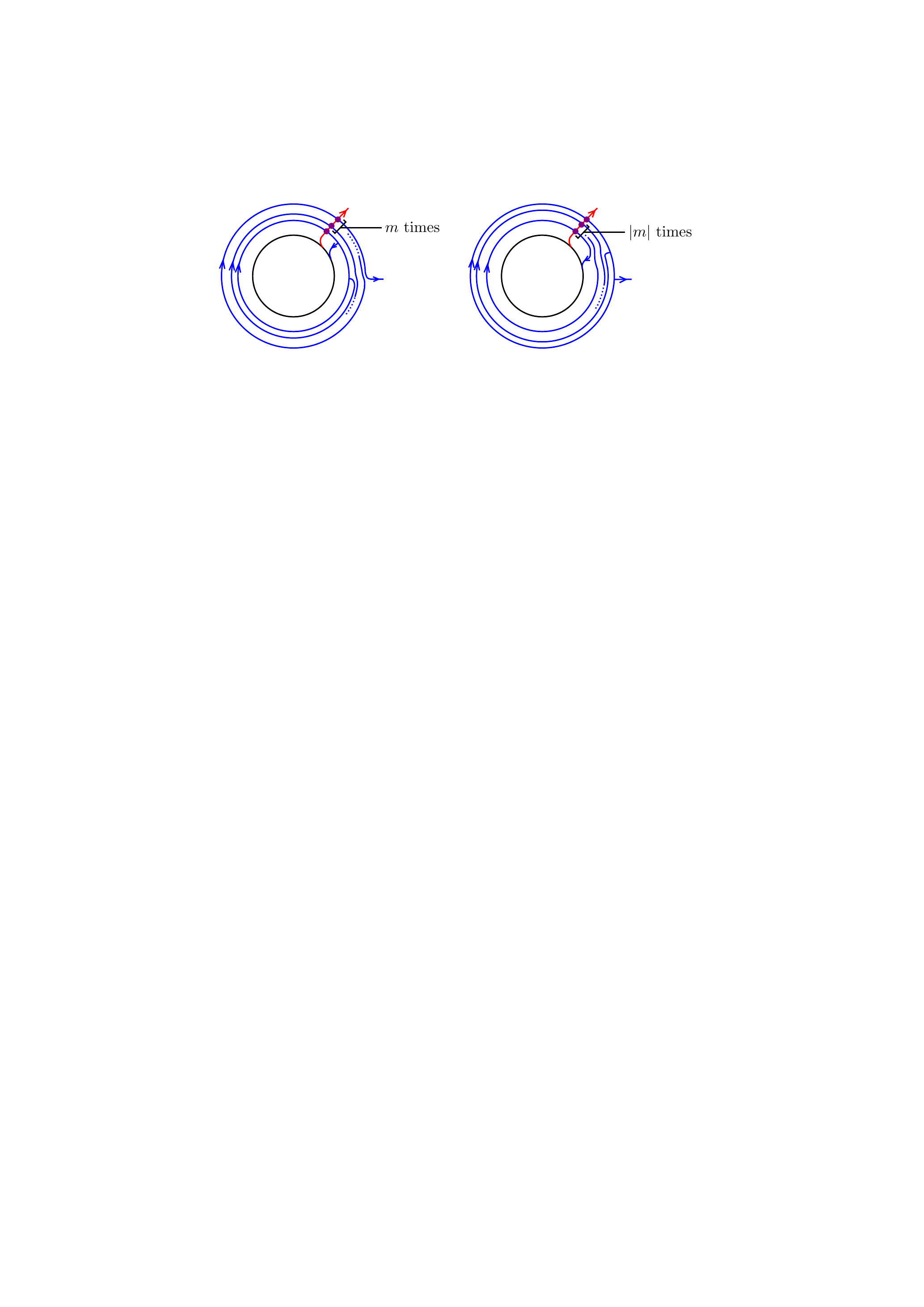}
\caption{Modification of the polyhedron according to the Dehn twists $\sigma_{a_p}^{m_p}$.}
\label{fig38-1}
\end{center}
\end{figure}

If $w_k=\sigma_{b_p}^{m_p}$ then, we apply the same modification to the blue curve on $\Sigma_{6k+2}$ shown on the left in Figure~\ref{fig38-2}, and
if $w_k=\sigma_{c_p}^{m_p}$ then we do the same modification to the blue curve 
on $\Sigma_{6k+5}$ shown on the right. All vertices appearing by these modifications are also of $\ell$-type.

\begin{figure}[htbp]
\begin{center}
\includegraphics[width=8.0cm, bb=194 623 399 711]{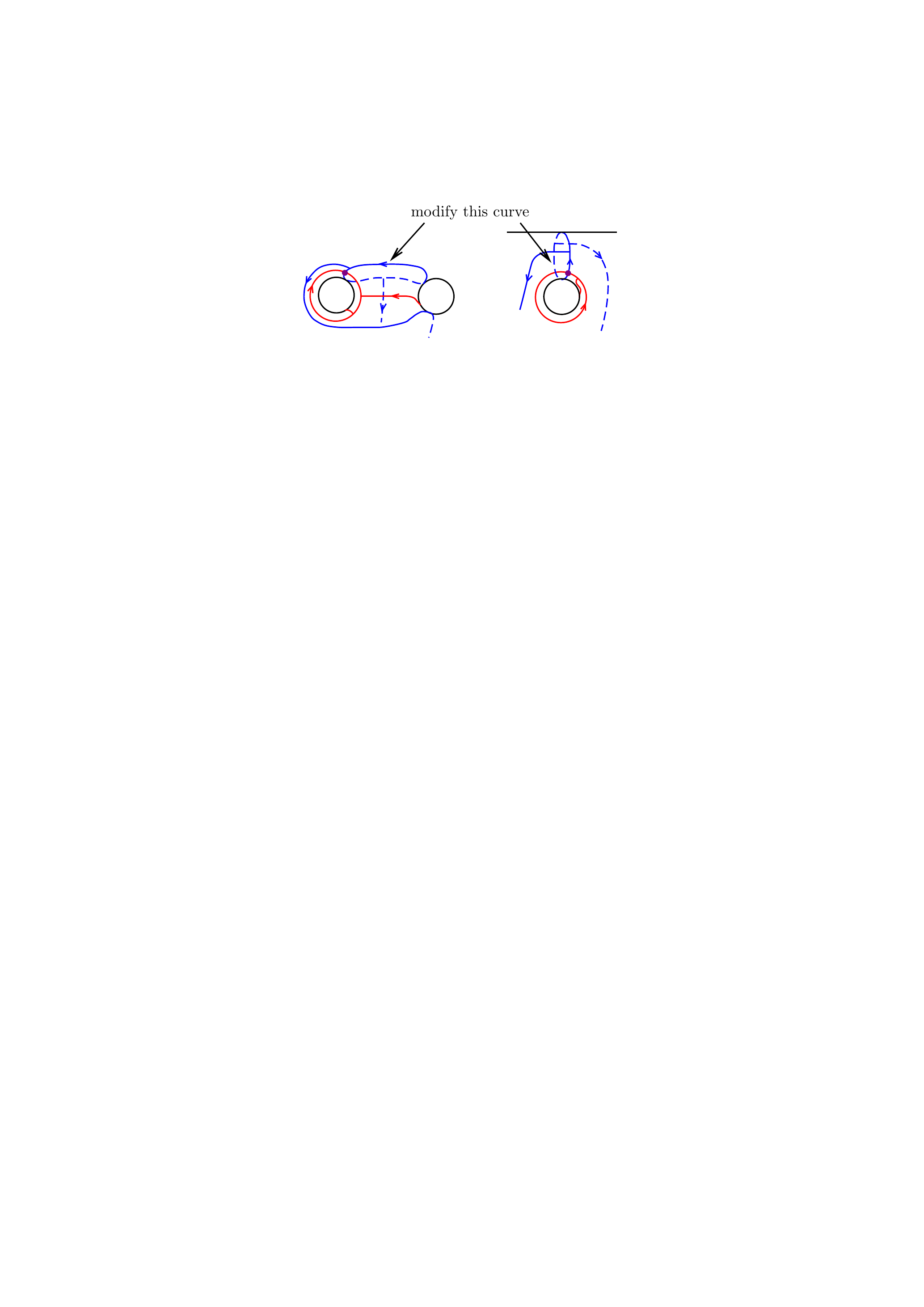}
\caption{Curves for modifications corresponding to the Dehn twists $\sigma_{b_p}^{m_p}$ and $\sigma_{c_p}^{m_p}$.}\label{fig38-2}
\end{center}
\end{figure}

We may choose a monodromy vector field of the open book on $M\setminus \Int\Nbd(L;M)$ such that it is positively transverse to the branched polyhedron constructed above.
We denote the obtained branched polyhedron embedded in $M\setminus\Int \Nbd(L;M)$ by $\hat P_1$.

Next, we construct a branched polyhedron in $\Nbd(L;M)$.
For each $(i,j)$, $i=1,\ldots,2g$, $j=1,\ldots,6n$,
we set a polyhedron $P_{i,j}$ shown in Figure~\ref{fig37}.
Glue the left side of $P_{i,j}$ and the right side of $P_{i+1,j}$ canonically,
where we regard $P_{2g+1,j}$ as $P_{1,j}$,
and denote the obtained polyhedron by $P_j$ for $j=1,\ldots,6n$.
We then glue $P_j$ and $P_{j+1}$ so that the top of $P_{i,j}$ is identified with
the bottom of $P_{i,j+1}$, where $P_{6n+1}$ is regarded as $P_1$.
We denote the obtained polyhedron by $\hat P_2$.
We assign orientations to the regions of $\hat P_2$ as shown in Figure~\ref{fig37},
so that $\hat P_2$ is a branched polyhedron.
The boundary of $\hat P_2$ consists of $2g$ circles and a connected graph.
Attach a disk $D_i$ to each of $2g$ circles and denote the obtained branched polyhedron by $\hat P_3$.
We can embed $\hat P_3$ into the solid torus $D^2\times S^1$ properly.
We regard this $D^2\times S^1$ as $\Nbd(L;M)$ and glue $\hat P_3$ and $\hat P_1$
so that the simple closed curve on $\partial \hat P_3\cap P_j$ (green curve)
coincides with the boundary of $\Sigma_j$ in $\hat P_1$.
We denote the obtained branched polyhedron embedded in $M$ by $\hat P_4$.
We can easily check that all vertices of $\hat P_4$ are of $\ell$-type.

\begin{figure}[htbp]
\begin{center}
\includegraphics[width=12.0cm, bb=130 571 449 712]{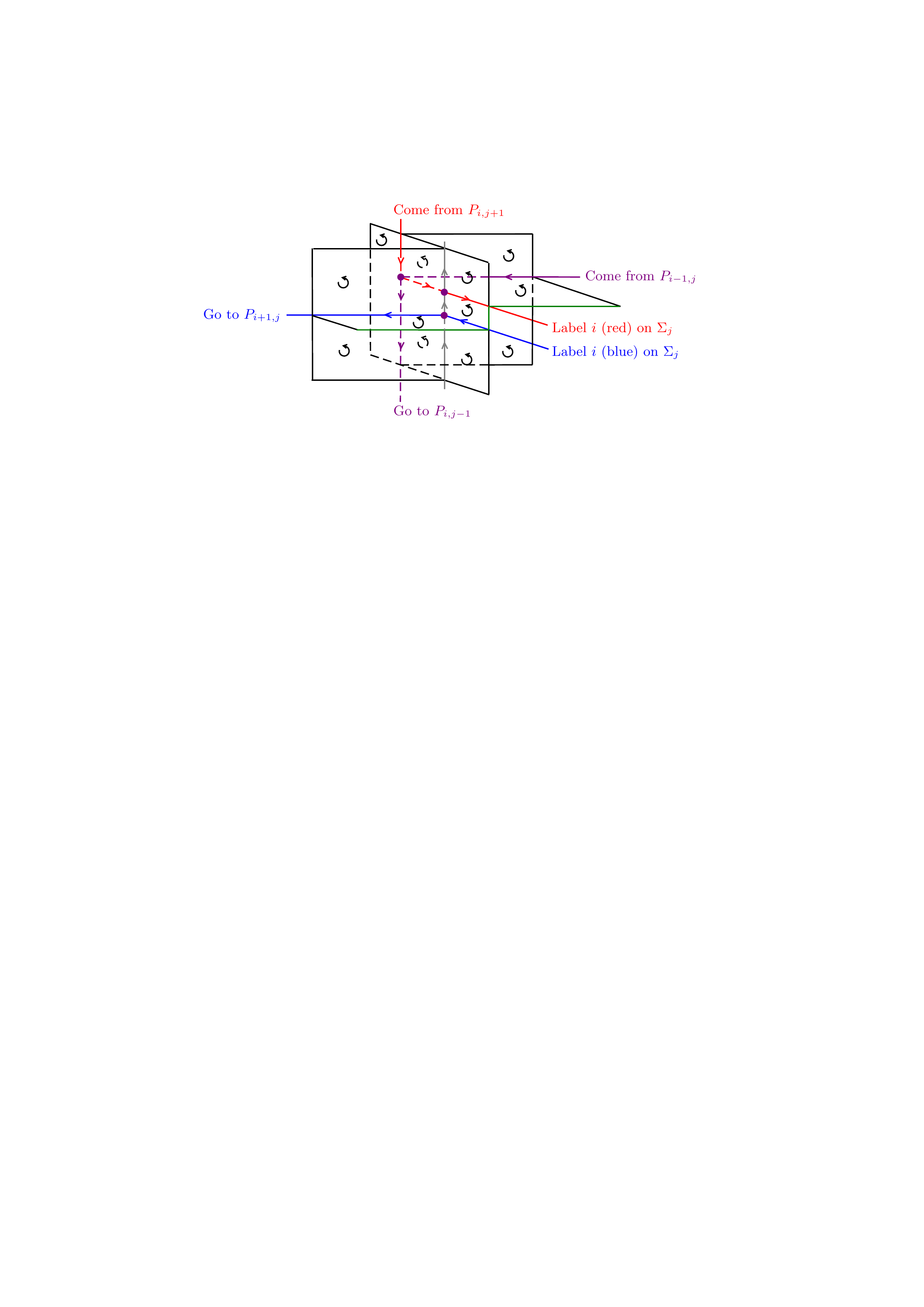}
\caption{The piece $P_{i,j}$.}\label{fig37}
\end{center}
\end{figure}

The complement $M\setminus \hat P_4$ consists of $2g+6n$ open balls
$B_1,\ldots,B_{2g}$ and $B'_1,\ldots, B'_{6n}$, where 
the $2g$ balls $B_1,\ldots, B_{2g}$ are the components bounded by $\hat P_3$
and each $B'_j$, $j=1,\ldots,6n$, 
is the component of  $M\setminus \hat P_4$ containing the connected component of 
$M\setminus\left(\hat P_4\cup \Nbd(L;M)\right)$
between $\Sigma_j$ and $\Sigma_{j+1}$.

The singular set $S(P)$ consists of $2g+6n$ immersed circles
$C_1,\ldots,C_{2g}$ and $C'_1,\ldots, C'_{6n}$,
where $C_i$ is a simple closed curve passing through $P_{i,1},\ldots,P_{1,6n}$,
which is drawn in gray in Figure~\ref{fig37},
and $C_j'$ is a simple closed curve given as follows:
Start from Label $1$ (red) on $\Sigma_{6k}$ in Figure~\ref{fig31} and follow the arrow.
Let $P_{i_0,6k}$ be the piece of $\hat P_4$ that contains the starting point.
The curve passes through $P_{i_0,6k+1}$ as depicted with black arrows in Figure~\ref{fig38} and comes back to Label $1$ (blue) on $\Sigma_{6k+1}$.
Then, as shown in Figure~\ref{fig37}, it goes to Label $2$ on $\Sigma_{6k+1}$ and then
goes down to Label $2$ (red) on $\Sigma_{6k}$.
By the same observation, we see that it passes through Labels $1, 2,\ldots, 8$ (red) on $\Sigma_{6k}$
in this order. This curve goes further and finally comes back to Label $1$ (red) on $\Sigma_{6k}$.
This is the curve $C_1'$. By the same way, the curves $C'_j$, $j=2,\ldots,6n$, are defined.

We modify a part of $C_i$ on $P_{i,1}$ for each $i=1,\ldots,2g$ as shown in Figure~\ref{fig39} and remove the bigon that appears by this move.
This modification does not yield a new vertex, connects $B_i$ to $B'_1$
and makes $C_i$ and $C_1'$ to be one immersed circle.
Applying this modification to each of $C_1,\ldots, C_{2g}$,
we obtain a branched polyhedron $\hat P_5$ with only vertices of $\ell$-type 
such that $M\setminus \hat P_5$ consists of $6n$ open balls $B''_1,B_2',\ldots, B'_{6n}$
and $S(P)$ consists of $6n$ immersed circles $C''_1,C_2'\ldots, C'_n$,
where $B_1''$ is the open ball obtained by connecting $B_1,\ldots,B_{2g}$ with $B_1'$
and $C_1''$ is the immersed circle obtained by connecting $C_1,\ldots,C_{2g}$ with $C_1'$.

\begin{figure}[htbp]
\begin{center}
\includegraphics[width=14.0cm, bb=129 580 562 711]{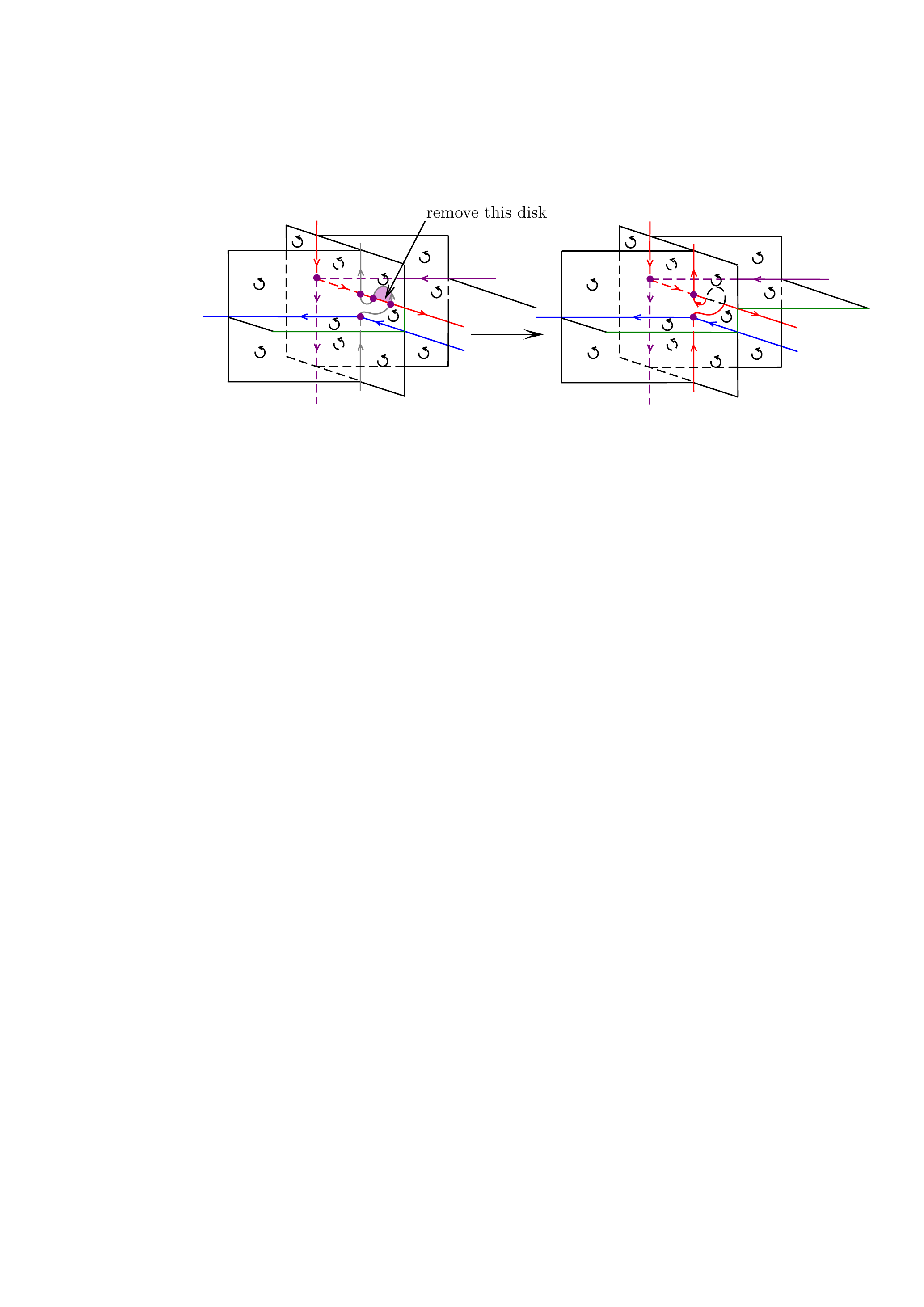}
\caption{Modification of $C_i$ on $P_{i,j}$.}\label{fig39}
\end{center}
\end{figure}

For each $j=2,\ldots,6n$,
we apply the same modification to the same part of $C_1''$ on $P_{1,j}$.
This does not yield a new vertex, connects $B'_j$ to $B''_1$
and makes $C''_1,C_2',\ldots,C_n'$ to be one immersed circle.
The obtained branched polyhedron $\hat P_6$ satisfies that all vertices are of $\ell$-type,
$M\setminus \hat P_6$ is one open ball and $S(P)$ is the image of an immersion of one circle.

We can see directly that 
there exists a monodromy vector field of the open book whose generated flow is carried by $\hat P_6$.
Thus we obtain the assertion.
\end{proof}

To prove Theorem~\ref{thm02}, we need to show the following stronger statement.

\begin{proposition}\label{prop63}
Let $(M, \Sigma, L, \phi)$ be an open book decomposition of 
a closed, connected, oriented $3$-manifold $M$ whose page is a surface of genus at least $2$ and whose binding is connected.
Then there exists a positive flow-spine $P$ of $M$ and a contact form $\alpha$ on $M$ such that the Reeb flow is carried by $P$ and also is a monodromy vector field of the open book.
\end{proposition}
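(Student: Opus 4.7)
The plan is to combine Proposition~\ref{prop61} with the contact form construction recalled in Section~\ref{sec25}, and then to verify that the resulting Reeb flow is a monodromy vector field of the open book. First, I would apply Proposition~\ref{prop61} to obtain a positive flow-spine $P=\hat P_6$ of $M$ that carries a monodromy vector field of the open book $(M,\Sigma,L,\phi)$; by construction the pages $\Sigma_j=f^{-1}(q_j)$ at the $6n$ discrete times sit as explicit subpolyhedra of $P$, and the binding $L$ is encoded in the piece $\hat P_3\subset \Nbd(L;M)$. Next, I would apply the construction of Section~\ref{sec25} to this $P$: choose the data $(\eta,\beta,\sigma,\tau,R)$ as in \cite[Section~6]{IIKN1} and form the contact form $\alpha=\hat\beta_M+R\eta$ with $R>0$ sufficiently large. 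By \cite[Theorem~1.1]{IIKN1}, the Reeb vector field $R_\alpha$ is positively transverse to $P$, so $R_\alpha$ is carried by $P$.

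The remaining task will be to show that $R_\alpha$ satisfies the two defining conditions of a monodromy vector field. For positive transversality to every page $\Sigma\times\{t\}$: the discrete pages $\Sigma_j\subset P$ are handled immediately by the transversality to $P$, while for intermediate $t$ the page lies in the single complementary ball $M\setminus P$, where $R_\alpha$ reduces to a constant flow in the model coordinates of $N_D=D^2\times[0,1]$. Using the freedom to isotope the open book within its isotopy class (which does not alter the supported contact structure, by Giroux's uniqueness), I would straighten the family $\{\Sigma\times\{t\}\}$ across the complement so that every intermediate slice is positively transverse to this constant flow, interpolating between the fixed discrete pages. For tangency of $R_\alpha$ to the binding $L$ with the correct orientation: the piece $\hat P_3$ of $P$ in $\Nbd(L;M)$ was designed precisely so that any flow carried by $P$ must circulate around $L$, and the model monodromy vector field furnished by Proposition~\ref{prop61} already achieves exact tangency there. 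Since $R_\alpha$ exhibits the same topological behaviour around $L$, I would upgrade this to exact tangency by exploiting the freedom in the choice of $\beta$ near $\hat P_3$ and, if necessary, performing a $C^\infty$-small perturbation of $\alpha$ supported in $\Nbd(L;M)$ and invoking Gray's theorem to recover a contact form isotopic to $\alpha$ whose Reeb field is tangent to $L$ with the prescribed orientation.

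The hard part will be this last step, namely producing exact tangency of $R_\alpha$ to $L$. The construction of Section~\ref{sec25} gives transversality to $P$ globally, but does not by itself pin down the direction of $R_\alpha$ along the $1$-dimensional locus $L$, which lies inside $\hat P_3$. A careful local analysis of $\hat\beta_M+R\eta$ in $\Nbd(L;M)$, making use of the structural rigidity imposed by the disks $D_i$ and the green boundary curves of $\hat P_3$, should allow this tangency to be attained directly, or at worst enforced by a small perturbation that leaves the property of being carried by $P$ undisturbed on $M\setminus\Nbd(L;M)$.
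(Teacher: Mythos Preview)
Your overall architecture---apply Proposition~\ref{prop61}, run the construction of Section~\ref{sec25}, then repair the behaviour near $L$---matches the paper's, and you correctly locate the crux in $\Nbd(L;M)$. But the repair mechanism you propose does not work.

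The obstruction near the binding is not a $C^\infty$-small one. As the paper points out explicitly, the Reeb field of $\alpha'=\hat\beta_M+R\eta$ may \emph{wind along $L$ in the direction opposite to the open book}; in that case no small perturbation of $\alpha'$ and no isotopy of the pages can make $R_{\alpha'}$ positively transverse to the meridional disks, let alone tangent to $L$ with the correct sign. Invoking Gray's theorem here is also a non-sequitur: Gray yields a contactomorphism between isotopic contact \emph{structures}, but gives no control over the Reeb vector field, which depends on the contact \emph{form}. So ``perturb and apply Gray'' cannot produce the required tangency. (Note also that $L$ is the core of the solid torus, not a subset of the spine $\hat P_3$, so transversality of $R_{\alpha'}$ to $P$ by itself says nothing about its direction along $L$.)

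The paper's fix is qualitatively different from a perturbation: it \emph{replaces} $\alpha'$ on $\Nbd(L;M)$ by an explicit model $\alpha_b=h_1(r)\,dz+h_2(r)\,d\theta$ on $D^2_s\times S^1$, whose Reeb field is tangent to the core with the correct orientation and positively transverse to the meridional disks by design. The substantive work is then to interpolate across a collar $N_T$ via $\alpha_T=(1-\chi(r))\alpha_b+\chi(r)g_T^*\alpha'$ and to check term by term that $\alpha_T\wedge d\alpha_T>0$; this uses property~(b) of $\eta$ (namely $d\eta=0$ on the relevant part of $N_D$), a careful choice of the gluing $g_T$ making $g_T^*\eta$ and $R_{\alpha_b}$ compatible, and requires taking $s$ small and $R$ large. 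Once this gluing is shown to be contact, both $\hat P_6$ and the pages extend into $\Nbd(L;M)$ transversely to $R_{\alpha''}$, and the monodromy condition follows directly. This replacement-and-gluing estimate is the missing ingredient in your plan.
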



\begin{proof}
By Proposition~\ref{prop61}, there exists a positive flow-spine $\hat P_6$ whose flow is a monodromy vector field of the open book.
In the construction of $\hat P_6$, the polyhedron $\hat P_3$ is contained in $\Nbd(L;M)$. 
Let $N_T$ be a small tubular neighborhood of $\partial \Nbd(L;M)$ in $M\setminus\Int\Nbd(L;M)$. Let $(z', r', \theta')$ be local coordinates on $N_T$ given as shown in Figure~\ref{fig45}. 
We choose coordinates $(u,v,w)$ on 
$N_D=D^2\times [0,1]$
so that $\frac{\partial}{\partial w}$ is tangent to $\partial \Nbd(L;M)$.
Then we take a $1$-form  $\beta$ on $\hat P_6$ and make a contact form $\alpha'$ on $M$ 
as explained in Section~\ref{sec25} such that $\ker\alpha'$ is supported by $\hat P_6$.

\begin{figure}[htbp]
\begin{center}
\includegraphics[width=11.5cm, bb=132 610 438 710]{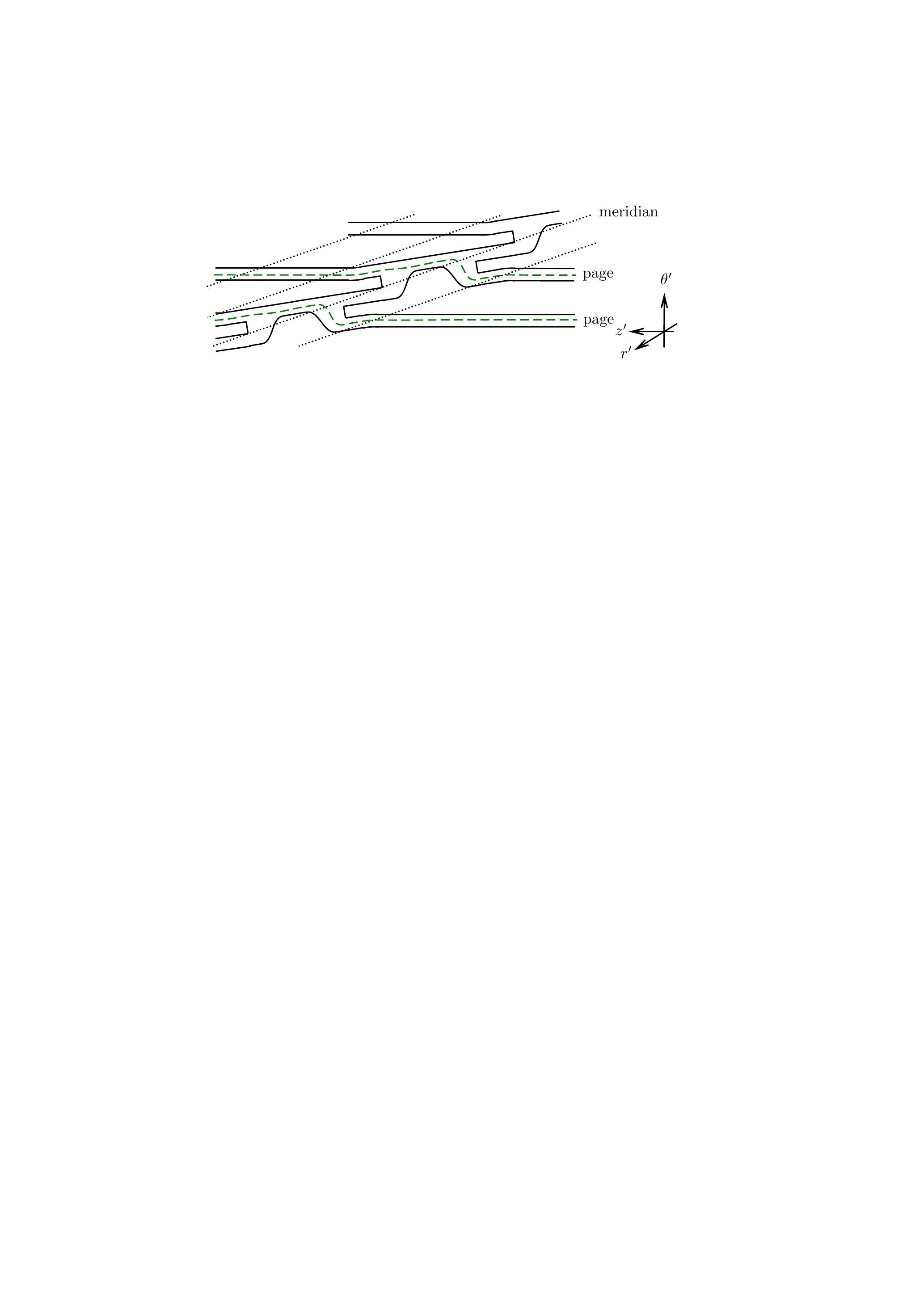}
\caption{Mutual positions of $N_P$, pages and the meridians of the open book on $\partial \Nbd(L;M)$.}\label{fig45}
\end{center}
\end{figure}

We may set the position of the pages of the open book in $M\setminus\Int\Nbd(L;M)$ in such a way that the pages are transverse to $R_{\alpha'}$ since the Reeb flow of $\alpha'$ is carried by $\hat P_6$. 
However, it is not always possible to choose positions of pages of the open book in $\Nbd(L;M)$ so that $R_{\alpha'}$ is transverse to the pages in $\Nbd(L;M)$.
For example, if $R_{\alpha'}$ is winding along $L$ in the direction opposite to the open book then definitely we cannot choose such positions. For this reason, we need to 
reconstruct $\alpha'$ in $\Nbd(L;M)$ such that $R_{\alpha'}$ is transverse to the pages of an open book in $\Nbd(L;M)$.

Let $D^2_s\times S^1$ be the solid torus equipped with the contact form 
$\alpha_b=h_1(r)dz+h_2(r)d\theta$,
where $D^2_s$ is the $2$-disk with radius $s>0$, $(r,\theta)$ are the polar coordinates of $D^2_s$, and $h_1(r)$ and $h_2(r)$ are functions such that 
\begin{itemize}
\item $(h_1(0), h_2(0))=(1,0)$,
\item $\left(\frac{dh_1}{dr}(0), \frac{dh_2}{dr}(0)\right)=(0,1)$,
\item $h_1$ is monotone decreasing and $h_2$ is monotone increasing.
\end{itemize}
The curve $(h_1(r), h_2(r))$ is given as shown in Figure~\ref{fig46}. 
Note that $\alpha_b$ is a contact form
and its Reeb vector field $R_{\alpha_b}$ is parallel to
$-\frac{dh_1}{dr}(r)\frac{\partial}{\partial \theta}+\frac{dh_2}{dr}(r)\frac{\partial}{\partial z}$
in the same direction.

\begin{figure}[htbp]
\begin{center}
\includegraphics[width=8.0cm, bb=173 609 406 711]{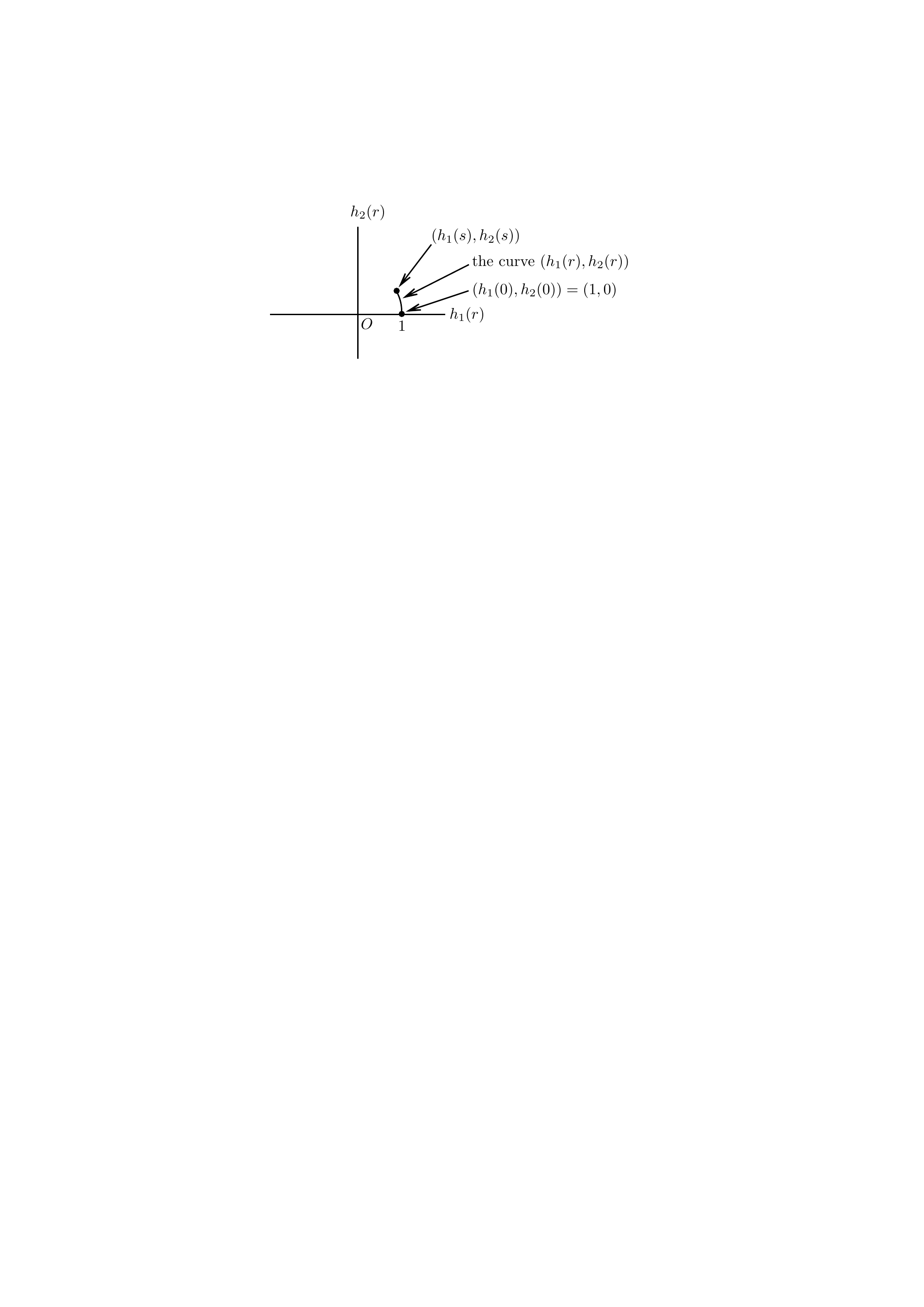}
\caption{The curve $(h_1(r), h_2(r))$.}\label{fig46}
\end{center}
\end{figure}

Let $g_T$ be a diffeomorphism from $N_T'=\{(r,\theta,z)\in D^2_s\times S^1\mid s'\leq r\leq s\}$ to $N_T$ 
that maps $\partial D_{s'} \times \{p\}$, $p\in S^1$, to the meridians of the open book 
and $\{q\}\times S^1$, $q\in\partial D_{s'}$, to the pages in $N_T$, where
$s'$ is a positive real number with $s'<s$ that is sufficiently close to $s$,
and further satisfies that
\begin{itemize}
\item[(i)] $g_T^*\eta=\nu_1(\theta,z)d\theta+\nu_2(\theta,z)dz$ with $\nu_1(\theta,z)>0$ and $\nu_2(\theta,z)\geq 0$, and 
\item[(ii)] $R_{\alpha_b}$ is positively transverse to $\ker (g_T^*\eta)$.
\end{itemize}
We may choose such a diffeomorphism as shown in Figure~\ref{fig45-2}.

\begin{figure}[htbp]
\begin{center}
\includegraphics[width=11.5cm, bb=151 609 444 710]{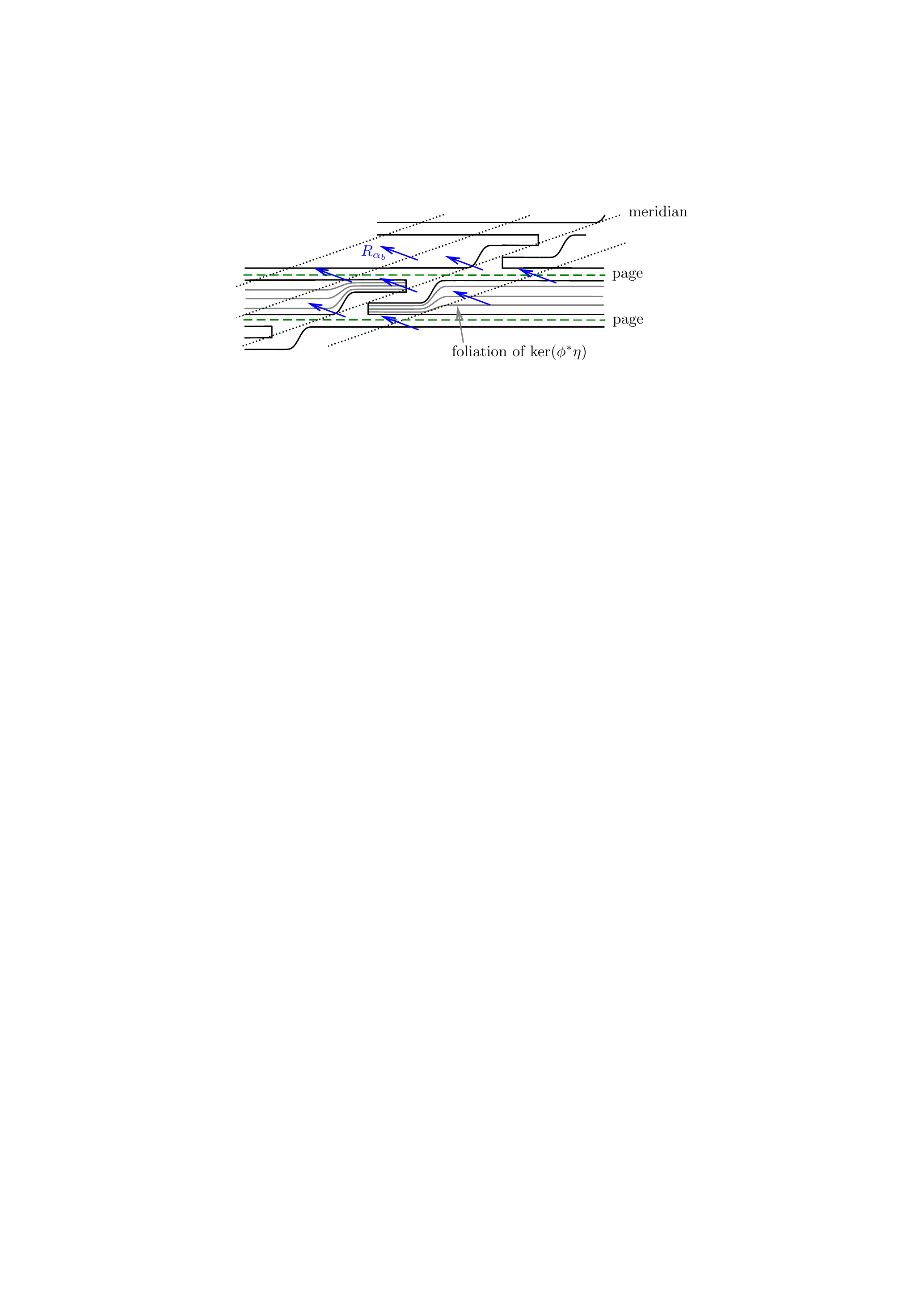}
\caption{Mutual positions of the preimages of $N_P$, pages and the meridians of the open book by $g_T$, the Reeb vector field $R_{\alpha_b}$ (blue arrows) and the foliations of $\ker(g_T^*\eta)$ (drawn in gray) on $\partial \Nbd(L;M)$.}\label{fig45-2}
\end{center}
\end{figure}

Set $\alpha_c=g_T^*(\alpha')$.
We define a contact form $\alpha_T$ on $N_T$ as
\[
   \alpha_T=(1-\chi(r))\alpha_b+\chi(r)\alpha_c,
\]
where $\chi$ is a monotone increasing smooth function which is $0$ for $r\leq s'$ and $1$ for $r\geq s$. Then
\[
\begin{split}
\alpha_T\land d\alpha_T
=&(1-\chi(r))^2\alpha_b\land d\alpha_b+\frac{d\chi}{dr}(r)\alpha_b\land dr\land \alpha_c \\
&+(1-\chi(r))\chi(r)(\alpha_b\land d\alpha_c+\alpha_c\land d\alpha_b)
+\chi(r)^2\alpha_c\land d\alpha_c.
\end{split}
\]

The second term is 
$\frac{d\chi}{dr}(r)\alpha_b\land dr\land \alpha_c=\frac{d\chi}{dr}(r)(h_1(r)dz+h_2(r)d\theta)\land dr\land g_T^*(\hat\beta_M+R\eta)$.
Since  $g_T^*\eta=\nu_1(\theta,z)d\theta+\nu_2(\theta,z)dz$ with $\nu_1(\theta,z)>0$ and $\nu_2(\theta,z)\geq 0$ by the condition~(i), we have 
\[
   (h_1(r)dz+h_2(r)d\theta)\land dr\land g_T^*\eta=
(h_1(r)\nu_1(\theta,z)-h_2(r)\nu_2(\theta,z))dz\land dr\land d\theta.
\]
We choose $s>0$ to be sufficiently small, which implies that $h_2(r)$ is sufficiently small
and hence
$(h_1(r)\nu_1(\theta,z)-h_2(r)\nu_2(\theta,z))dz\land dr\land d\theta>0$.
Choosing $R$ to be sufficiently large, we may conclude that the second term is non-negative.

Now we observe the third term.
Substituting $\alpha_c=g_T^*(\hat\beta_M+R\eta)$,  we have
\[
\alpha_b\land d\alpha_c+\alpha_c\land d\alpha_b=
   \alpha_b\land g_T^*(d\hat\beta_M)+g_T^*(\hat\beta_M+R\eta)\land d\alpha_b,
\]
where we used $d\eta=0$ on $N_D\setminus (\hat W\cup \check W)$ 
in Property~(b) in Section~\ref{sec25}.
Note that $N_T\cap (\hat W\cup \check W)=\emptyset$ due to the choice of the coordinates $(u,v,w)$ in the first paragraph of this proof.
Since $R_{\alpha_b}$ is positively transverse to $\ker\eta$ due to the condition~(ii), 
we have $g_T^*\eta\land d\alpha_b>0$.
Hence the third term is positive for a sufficiently large $R$.
Thus $\alpha_T$ is a contact form on $N_T$.
This means that we can glue the contact forms $\alpha'$ on $M\setminus \Int\Nbd(L;M)$ 
and $\alpha_b$ on $\Nbd(L;M)$ along $N_T=g_T(N_T')$ via the gluing map $g_T$.
We denote the obtained contact form on $M$ by $\alpha''$.

We can extend $\hat P_6$ in $M\setminus \Int\Nbd(L;M)$ into $\Nbd(L;M)$ such that  
the vector field $R_{\alpha''}$ is positively transverse to $\hat P_6$ in $\Nbd(L;M)$.
Thus  the Reeb flow of $\alpha''$ is carried by $\hat P_6$.
We can also extend the pages of the open book in $M\setminus\Int\Nbd(L;M)$ into $\Nbd(L;M)$ with the binding $L=\{0\}\times S^1\subset D_s\times S^1$ such that $R_{\alpha''}$ is transverse to the interior of the pages and tangent to $L$. 
Thus $R_{\alpha''}$ is a monodromy vector field of the open book.
This completes the proof.
\end{proof}

\begin{proof}[Proof of Theorem~\ref{thm02}]
Let $\xi$ be the given contact structure on $M$.
By Theorem~\ref{thm_giroux}, there exist an open book decomposition of $M$ 
and a contact form $\alpha$ on $M$ such that the Reeb vector field $R_\alpha$ 
is a monodromy vector field of the open book.
We may assume that the genus of the page of this open book is at least $2$ and the binding is connected by using plumbings of 
positive Hopf bands.
Note that this modification of the open book does not change the contact structure~\cite{Tor00}.
Let $P$ and $\alpha''$ be the positive flow-spine of $M$ and the contact form on $M$ obtained in the proof of Proposition~\ref{prop63}, where the Reeb flow of $\alpha''$ is carried by $P$ and gives a monodromy vector field of the open book.
Since $\ker\alpha$ and $\ker \alpha''$ are supported by the same open book, there exists a one-parameter family of contact forms from $\ker\alpha$ to $\ker\alpha''$~\cite{Gir02} and hence, 
by Gray's stability~\cite{Gra59}, 
there exists a contactomorphism $\psi$ from $(M,\ker\alpha)$ to $(M,\ker\alpha'')$.
Thus, we have proved that, for the given contact structure $\ker\alpha$, $\psi^*\alpha''$ is a contact form such that $\ker\alpha=\ker\psi^*\alpha''$ and the flow generated by the Reeb vector field $R_{\psi^*\alpha''}$ is carried by the positive flow-spine $\psi^{-1}(\hat P_6)$.
That is, $\ker\alpha$ is supported by $\psi^{-1}(\hat P_6)$. This completes the proof.
\end{proof}

The next is a corollary of Theorem~\ref{thm02}.

\begin{corollary}\label{cor02}
Any flow-spine can be deformed to a positive flow-spine by applying first and second regular moves successively.
\end{corollary}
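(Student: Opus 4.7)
The plan is to reduce Corollary~\ref{cor02} to Theorem~\ref{thm_ishii} via Theorem~\ref{thm02}. Given a flow-spine $P$ of a closed, connected, oriented $3$-manifold $M$ carrying a non-singular flow $\mathcal F$, what I would produce is a positive flow-spine $P'$ of $M$ whose carried flow is homotopic to $\mathcal F$ through non-singular flows. Theorem~\ref{thm_ishii} then directly supplies a sequence of first and second regular moves relating $P$ and $P'$, and since these moves (together with their mirrors) are reversible, the sequence deforms $P$ into $P'$.

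Constructing such a $P'$ proceeds in two sub-steps. First, I would produce a contact structure on $M$ whose Reeb flow is homotopic to $\mathcal F$. For this, use the standard correspondence between non-vanishing vector fields and co-oriented $2$-plane fields on the oriented $3$-manifold $M$: fixing an auxiliary Riemannian metric, the orthogonal complement $\xi_{\mathcal F}$ of the vector field generating $\mathcal F$ is a co-oriented $2$-plane field, and the assignment $[\mathcal F]\leftrightarrow [\xi_{\mathcal F}]$ descends to a bijection on homotopy classes (a straight-line homotopy between two vector fields positively transverse to a fixed plane field stays non-vanishing). By the Lutz--Martinet theorem, $\xi_{\mathcal F}$ is homotopic as a $2$-plane field to some contact structure $\xi$ on $M$, and any Reeb vector field of any contact form defining $\xi$ is positively transverse to $\xi$, so the flow it generates is homotopic to $\mathcal F$.

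Second, apply Theorem~\ref{thm02} to $(M,\xi)$: there is a positive flow-spine $P'$ of $M$ supporting $\xi$, meaning that for some contact form $\alpha$ with $\ker\alpha=\xi$ the Reeb flow of $\alpha$ is carried by $P'$. By the previous step, this Reeb flow is homotopic to $\mathcal F$, so Theorem~\ref{thm_ishii} yields the desired sequence of regular moves relating $P$ and $P'$.

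The main obstacle is the passage from $\mathcal F$ to the contact structure $\xi$: one has to verify that the Lutz--Martinet construction produces a contact structure whose Reeb vector field is homotopic to the generator of $\mathcal F$ as a non-vanishing vector field, and not merely one whose contact plane field is homotopic to $\xi_{\mathcal F}$ as an unoriented $2$-plane field. Tracking co-orientations throughout the dictionary above takes care of this subtlety. Everything else is an immediate consequence of Theorems~\ref{thm02} and~\ref{thm_ishii}.
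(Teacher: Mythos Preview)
Your proposal is correct and follows essentially the same route as the paper's proof: produce a contact structure whose Reeb flow lies in the homotopy class of the carried flow, invoke Theorem~\ref{thm02} to get a positive flow-spine carrying that Reeb flow, and then apply Theorem~\ref{thm_ishii}. The only difference is that the paper cites Eliashberg's classification of overtwisted contact structures~\cite{Eli89} for the existence of a contact structure in a prescribed homotopy class of plane fields, whereas you invoke the Lutz--Martinet theorem; either reference suffices for this step.
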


\begin{proof}
For any homotopy class of non-singular flows, there exists a contact structure whose Reeb flow belongs to the same homotopy class~\cite{Eli89}. 
By Theorem~\ref{thm02}, any contact structure is supported by a positive flow-spine.
Therefore, we can find a sequence of regular moves that relates a given flow-spine with the positive one by Theorem~\ref{thm_ishii}.
\end{proof}

\end{document}